\DeclareSymbolFont{AMSb}{U}{msb}{m}{n} 			
\renewcommand{\in}{\smallin} \renewcommand{\notin}{\notsmallin} \renewcommand{\setminus}{\smallsetminus}  
\newtheorem{theorem}{Theorem}
\newtheorem{proposition}[theorem]{Proposition}
\newtheorem{lemma}[theorem]{Lemma}
\newtheorem{claim}{Claim}
\newtheorem*{claim*}{Claim}
\newcommand{\claimdone}{\hfill$\blacksquare$\par}
\newtheorem{fact}[theorem]{Fact}
\newtheorem{corollary}[theorem]{Corollary}
\theoremstyle{definition}
\newtheorem{definition}[theorem]{Definition}
\newtheorem{question}[theorem]{Question}
\newcommand{\pw}[1]{\mathcal{P}\left(#1\right)}  
\newcommand{\fin}{\textup{fin}}          
\newcommand{\gen}{\textup{gen}}          
\newcommand{\iter}{\mathbin{*}}          
\newcommand{\os}{\mleft \{ \,}             
\newcommand{\cs}{\, \mright \}}             
\newcommand{\la}{\left\langle \,}           
\newcommand{\ra}{\,\right\rangle}          
\newcommand{\card}[1]{\mleft| #1 \mright|}    
\DeclareMathOperator{\non}{non}
\DeclareMathOperator{\cov}{cov}
\DeclareMathOperator{\cof}{cof}
\DeclareMathOperator{\tr}{tr}
\newcommand{\RO}[1]{\mathop{\mathrm{RO}}\left(#1\right)}  
\newcommand{\coem}{\lessdot}
\newcommand{\conc}{^\smallfrown}           
\renewcommand{\restriction}{\mathbin{\!\upharpoonright}}   
\renewcommand{\mid}{\shortmid}             
\title[Mathias--Prikry and Laver type forcing]{Mathias--Prikry and Laver type forcing; \\ Summable ideals, coideals, and $+$-selective filters}
\author{David Chodounsk\'y}
\address{Institute of Mathematics of the Academy of Sciences of the Czech Republic,
\v{Z}itn\'{a}~25, Praha~1, Czech Republic}
\email{david.chodounsky@matfyz.cz}
\author{Osvaldo Guzm\'{a}n Gonz\'{a}lez}
\address{Instituto de Matem\'{a}ticas, Universidad Nacional Aut\'onoma de M\'exico, Apartado Postal 61-3, Xangari, 58089, Morelia,
Michoac\'{a}n, M\'{e}xico}
\email{oguzman@matmor.unam.mx}
\author{Michael Hru\v{s}\'{a}k}
\address{Instituto de Matem\'aticas, Universidad Nacional Aut\'onoma de M\'exico,
\'Area de la Investigaci\'on Cient\'{\i}fica, Circuito exterior, Ciudad Universitaria, 04510, M\'exico, D.\ F., M\'exico}
\email{michael@matmor.unam.mx}
\urladdr{http://www.matmor.unam.mx/~michael/}
\subjclass[2010]{Primary: 03E05, 03E17, 03E35}
\keywords{Mathias--Prikry forcing, Laver type forcing, Mathias like real, $+$-selective filter, dominating real, eventually different real, $\omega$-hitting}
\thanks{
	The work of the first author was supported by the GACR project 15-34700L and RVO:\ 67985840. The second and third authors gratefully acknowledge support from \mbox{PAPIIT} grant IN 108014 and \mbox{CONACyT} grant 177758.
}
\date{\today}
\begin{document}

\begin{abstract}
We study the Mathias--Prikry and the Laver type forcings associated with filters and coideals.
We isolate a crucial combinatorial property of Mathias reals, and prove
that Mathias--Prikry forcings with summable ideals are all mutually bi-embeddable.
We show that Mathias forcing associated with the complement of an analytic ideal always adds a dominating real.
We also characterize filters for which the associated Mathias--Prikry
forcing does not add eventually different reals, and show that they are countably generated provided they are Borel.
We give a characterization of $\omega$-hitting and $\omega$-splitting families which retain their property
in the extension by a Laver type forcing associated with a coideal.
\end{abstract}

\maketitle

\section*{Introduction}\label{sec:intro}

\noindent
The Mathias--Prikry and the Laver type forcings were introduced in~\cite{mathias} and~\cite{groszek} respectively.
Recently, properties of these forcings were characterized in
terms of properties of associated filters, see~\cite{jorg-michael,david-lyubomyr,gru-hru-mar,hrusak-minami}.
We continue this line of research, and investigate forcings associated with coideals.

\section{Preliminaries}\label{sec:prelim}

Our notation and terminology is fairly standard.
We give here an overview of basic notions used in this paper.
We sometimes neglect the formal difference between
integer singletons and integers, if no confusion is likely to occur.
We are mostly concerned with filters and ideals
on $\omega$ and on the set of finite sets of integers
$\fin = {[\omega]}^{<\omega}$.
If a domain of a filter or ideal is not specified or obvious,
it is assumed that the domain is $\omega$.
All filters and ideals are assumed to be proper and
to extend the Fr\'echet filter.

For $a, b \subseteq \omega$ we write
$a\subset^* b$ if $a \setminus b \in \fin$,
$a =^* b$ if $a \subset^* b$ and $b \subset^* a$,
$a < b$ if $n < m$ for each $n \in a$ and $m \in b$, and
$a \sqsubseteq b$ if there is $n \in \omega$ such that $a = b \cap n$.

A \emph{tree} $T$ will usually be an initial subtree of the tree of finite
sequences of integers $\left(\omega^{<\omega} , \subseteq \right)$
with no leaves. The space of maximal branches of $T$ is denoted $[T]$.
For $t \in T$ we denote by $T[t]$ the subtree 
consisting of all nodes of $T$ compatible with $t$.
An element $r \in T$ is called the \emph{stem} of $T$ if $r$ is the maximal node of $T$ such that $T = T[r]$.
For $a \subseteq \omega$ we denote by $T^{[a]}$ the set of
all nodes $t \in T$ such that $\card{t} \in a$ (i.e.\ the nodes from levels in $a$).
A node $t \in T$ is a \emph{branching node} of $T$ if $t$ has at least two immediate successors in $T$.
For $\mathcal X \subset \pw{\omega}$ we call $t$ an \emph{$\mathcal X$-branching} node if
$\os i \in \omega \mid t\conc i \in T\cs \in \mathcal X$.
A tree is an \emph{$\mathcal X$-tree} if every node of $T$ is $\mathcal X$-branching.

For $\mathcal X \subset \pw{\omega}$ and $A \subseteq \omega$ we write
$\mathcal X \restriction A$ for the set $\os X \cap A \mid X \in \mathcal X \cs$.
For a filter $\mathcal F$ we denote by $\mathcal F^*$ the dual ideal, and
by $\mathcal F^+$ the complement of $\mathcal F^*$ (i.e.\ the $\mathcal F$ positive sets).
For an ideal $\mathcal I$ we denote $\mathcal I^*$ the dual filter,
$\mathcal I^+ = \mathcal {(I^*)}^+$. A complement of an ideal is called a \emph{coideal}.
We will generally not distinguish between terminology for properties of a filter and of the dual ideal,
i.e.\ statements ``$\mathcal F$ is $\varphiup$'' and  ``$\mathcal F^*$ is $\varphiup$'' are
often regarded as synonymous.
We will sometimes speak of filters on general countable sets as if they were filters on $\omega$.
In these cases statements about these filters are understood as statements about
filters on $\omega$ isomorphic with them.

We call an ideal $\mathcal I$ \emph{summable} if there is a function $\mu \colon \omega \to \mathbb R$
such that $\mathcal I = \os I \subseteq \omega \mid \sum \os \mu(i) \mid i \in I \cs < \infty \cs$.
We say that $\mathcal I$ is tall if $\mathcal I \cap {[A]}^{\omega} \neq \emptyset$
for each $A \in {[\omega]}^\omega$.
An ideal $\mathcal I$ is below an ideal $\mathcal J$ in the \emph{Rudin--Keisler order},
$\mathcal I \leq_\mathrm{RK} \mathcal J$ if there is a function $f\colon \omega \to \omega$
such that $I \in \mathcal I$ iff $f^{-1}[I] \in \mathcal J$ for each $I \subseteq \omega$.
We say that $\mathcal I$ is \emph{Rudin--Blass} below $\mathcal J$, $\mathcal I \leq_\mathrm{RB} \mathcal J$
if the witnessing function $f$ is finite-to-$1$.
The Rudin--Keisler and Rudin--Blass ordering on filters is defined in the same way as
on ideals. Note that for ideals is
$\mathcal I \leq_\mathrm{RK} \mathcal J$ iff $\mathcal I^* \leq_\mathrm{RK} \mathcal J^*$,
and similarly for~$\leq_\mathrm{RB}$.

For a filter $\mathcal F$ we will consider the filter $\mathcal F^{<\omega}$ generated by
sets ${[F]}^{<\omega}$ for $F \in \mathcal F$.
If $\mathcal F$ is a filter on $\omega$, then $\mathcal F^{<\omega}$ is a filter on $\fin$.
Notice that for $X \subset \fin$ is $X \in {\mathcal{F}^{<\omega}}^+$
iff for each $F \in \mathcal F$ there is $a \in X$ such that $a \subset F$,
and iff for each $F \in \mathcal F$ there are infinitely many $a \in X$ such that $a \subset F$.
The elements of ${\mathcal F^{<\omega}}^+$ are sometimes called the \emph{$\mathcal F$-universal sets}.

A filter $\mathcal F$ is a \emph{P$^+$-filter} if for every sequence
$\os X_{n}\mid n\in\omega \cs  \subseteq\mathcal{F}^{+}$ there is a sequence
$Y = \os y_{n} \in {[X_n]}^{<\omega} \mid n \in \omega \cs$ such that $\bigcup Y \in \mathcal F^+$.

The ideal of all meager sets of reals is denoted by $\mathscr M$.
For $f, g \in \omega^\omega$ write $f =^\infty g$ if $\os n \in \omega \mid f(n) = g(n) \cs$ is infinite.
Recall that $\cov (\mathscr M) = \mathfrak b (\omega^\omega, =^\infty)$ and
$\non (\mathscr M) = \mathfrak d(\omega^\omega, =^\infty)$.
Let $V$ be a model of set theory.
We say that $e \in \omega^\omega$ is an \emph{eventually different} real (over $V$)
if $e \neq^\infty f$ for each $f \in \omega^\omega \cap V$.
We say that $d \in \omega^\omega$ is a \emph{dominating} real (over $V$)
if $f <^* d$ for each $f \in \omega^\omega \cap V$.
Every dominating real is an eventually different real.
For every $f \in \omega^\omega$ the set $\os g \in \omega^\omega \mid g =^\infty f\cs$
is a dense G$_\deltaup$ subset of $\omega^\omega$.
The following proposition is well known, the proof is analogous to 
the proof of~\cite[Lemma~2.4.8]{judah-bartoszynski}. 

\begin{proposition}\label{prop:meager_iff_evdiff}
	Let $V$ be a model of set theory. The set $V \cap \omega^\omega$ is meager in $\omega^\omega$
	if and only if there exists an eventually different real over $V$.
\end{proposition}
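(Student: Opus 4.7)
The plan is to prove the two implications separately. The direction $(\Leftarrow)$ is straightforward and follows from the topological description of the set of reals eventually different from a fixed function. The direction $(\Rightarrow)$ is the substantive one and follows the combinatorial strategy of the proof of Lemma~2.4.8 in Bartoszy\'nski--Judah.

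For $(\Leftarrow)$, suppose $e \in \omega^\omega$ is eventually different over $V$. Then every $f \in V \cap \omega^\omega$ satisfies $f \neq^\infty e$, so
\[
V \cap \omega^\omega \subseteq \os f \in \omega^\omega \mid f \neq^\infty e \cs = \bigcup_{N \in \omega} \os f \in \omega^\omega \mid f(n) \neq e(n) \text{ for all } n \geq N \cs.
\]
Each set in the union is closed, being an intersection of clopen sets, and each is nowhere dense: for any basic cylinder $[s]$ with $|s| \geq N$, the sub-cylinder $[s \conc e(|s|)]$ is disjoint from the set. Hence $V \cap \omega^\omega$ is contained in an $F_\sigma$ meager set, and is therefore itself meager.

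For $(\Rightarrow)$, suppose $V \cap \omega^\omega$ is meager in $\omega^\omega$. My plan is first to invoke Bartoszy\'nski's combinatorial characterization of meager subsets of $\omega^\omega$: there exist $h \in \omega^\omega$ and an interval partition $\os I_n \mid n \in \omega \cs$ of $\omega$ such that
\[
\forall f \in V \cap \omega^\omega,\ \forall^\infty n,\ f \restriction I_n \neq h \restriction I_n.
\]
From this combinatorial data I then construct an eventually different real $e \in \omega^\omega$ by a block-by-block inductive definition. At each block $I_n$, the choice of $e \restriction I_n$ exploits both the infinite supply of values in $\omega$ and the closure properties of the model $V$, in such a way that for every $f \in V \cap \omega^\omega$ we can guarantee $e(k) \neq f(k)$ for all $k$ in $I_n$ from some stage on.

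The hardest part, and the place where the proof genuinely requires care, is bridging the gap between the blockwise distinctness that Bartoszy\'nski's characterization supplies (for each $f \in V \cap \omega^\omega$ and each large $n$, \emph{some} coordinate $k \in I_n$ satisfies $f(k) \neq h(k)$) and the pointwise eventual distinctness that $e$ is required to exhibit ($e(k) \neq f(k)$ for all but finitely many $k$). This is the combinatorial heart of the argument, effected by an inductive diagonalization entirely analogous to the one in Lemma~2.4.8 of Bartoszy\'nski--Judah.
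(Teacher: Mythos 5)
Your $(\Leftarrow)$ direction is correct and complete; that is the easy half. The paper itself gives no written proof of the proposition but simply points to Lemma~2.4.8 of Bartoszy\'nski--Judah, and your $(\Rightarrow)$ direction ultimately does the same thing: the step you yourself call ``the combinatorial heart'' --- converting blockwise disagreement into a single pointwise eventually different real --- is precisely the content of that cited lemma, and declaring it to be ``effected by an inductive diagonalization entirely analogous to'' it leaves the only nontrivial step of the argument unperformed. The difficulty there is concrete, not just expository: the natural move is to code each block $h \restriction I_n$ by one integer $e(n)$ and apply the blockwise hypothesis to the coded ground-model function $\tilde f(n) = \langle f \restriction I_n \rangle$, but the partition $(I_n)$ and $h$ are produced in the extension $U$, not in $V$, so $\tilde f$ need not be a ground-model real and the hypothesis says nothing about it. Getting around this (for instance by diagonalizing against the functions $s \mapsto f \restriction s$ defined on all of $\fin$, which \emph{do} lie in $V$) is the actual work, and none of it appears in your write-up.

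There is also a problem earlier in your outline: the combinatorial characterization you invoke is not a true statement about $\omega^\omega$. The equivalence ``$A$ is meager iff there exist an interval partition $(I_n)$ and a real $h$ such that every $f \in A$ satisfies $f \restriction I_n \neq h \restriction I_n$ for all but finitely many $n$'' is a theorem about $2^\omega$ (or finite-branching products), where its proof relies on there being only finitely many candidate restrictions to a given interval. In $\omega^\omega$ only the trivial implication survives (a set avoided by a chopped real is meager); the converse fails even for closed nowhere dense sets. For example, $F = \{\, f \in \omega^\omega \mid f(n) \neq f(0) \text{ for all } n \geq 1 \,\}$ is closed and nowhere dense, yet for any candidate $(h, (I_n))$ one may choose $c$ outside the range of $h$ and set $f(0) = c$, $f(n) = h(n)$ for $n \geq 1$; this $f$ belongs to $F$ and agrees with $h$ on every block not containing $0$. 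So before your step can even begin, you must either transfer the meagerness hypothesis to $2^\omega$ (say via a homeomorphism of $\omega^\omega$ onto a comeager subset of $2^\omega$ whose complement consists of ground-model points) or exploit the closure properties of the particular set $V \cap \omega^\omega$; neither is addressed. As it stands, the forward direction is an outline whose two essential steps are, respectively, false as stated and deferred to the reference.
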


The Cohen forcing for adding a subset of a set $X \subseteq \omega$ will be denoted $\mathbb C_X$,
and $\mathbb C$ denotes $\mathbb C_\omega$.
The conditions of $\mathbb C_X$ are finite subsets of $X$ ordered by $\sqsupseteq$.
A~Cohen generic real is the union of a generic filter on $\mathbb C_X$.

Let $\mathcal X$ be a family of subsets of $\omega$, typically a filter or a coideal.
The \emph{Mathias--Prikry forcing} $\mathbb M(\mathcal X)$ associated with $\mathcal X$
consists of conditions of the form $(s, A)$ where $s \in \fin$ and $A \in \mathcal X$.
Although we usually assume $s < A$, we do not require it.
The ordering is given by $(s, A) \leq (t, B)$ if $t \sqsubseteq s$, $A \subseteq B$,
and $s \setminus t \subset B$.
Given a generic filter on $\mathbb M(\mathcal X)$, we call the union of the first coordinates
of conditions in the generic filter the \emph{$\mathbb M(\mathcal X)$ generic real}.
Given $\mathcal X$ and $r \subset \omega$, we denote
$G_r(\mathcal{X}) = \os (s, A) \in \mathbb M(\mathcal X) \mid s \sqsubseteq r, r \subseteq A \cs$.
It is easy to see that $r$ is an $\mathbb M(\mathcal X)$ generic real iff
$G_r(\mathcal{X})$ is a generic filter on $\mathbb M(\mathcal X)$.
Properties of $\mathbb{M}(\mathcal F)$ when $\mathcal F$ is an ultrafilter were studied in~\cite{canjar}
and for $\mathcal F$ a general filter in~\cite{hrusak-minami,david-lyubomyr}.
Since $\mathbb{M}(\mathcal F)$ is $\sigmaup$-centered, it always adds an unbounded real.
On the other hand, it was shown that $\mathbb{M}(\mathcal F)$ can be weakly $\omega^\omega$-bounding and even
almost $\omega^\omega$-bounding.
Filters for which $\mathbb{M}(\mathcal F)$ is weakly $\omega^\omega$-bounding
are called Canjar, and these are exactly those filters for which
$\mathcal{F^{<\omega}}$ is a P$^+$-filter.

The \emph{Laver type forcing} associated with $\mathcal X$ is denoted by $\mathbb L(\mathcal X)$.
Conditions in this forcing is trees $T \subseteq \omega^{<\omega}$ with stem $t$
such that every node $s \in T$, $t \leq s$, is $\mathcal X$-branching.
The ordering of $\mathbb L(\mathcal X)$ is inclusion.
Given a generic filter on $\mathbb L(\mathcal X)$,
the generic real is the union of stems of conditions in the generic filter.
The generic real is a function dominating $\omega^\omega \cap V$, unless $\mathcal X \cap \fin \neq \emptyset$.
Properties of $\mathbb{L}(\mathcal F)$ for $\mathcal F$ filter were studied in~\cite{hrusak-minami,jorg-michael}.

For an ideal $\mathcal I$ on $\omega$, the forcing $\left(\pw{\omega} /\mathcal{I} , \subset \right)$
adds a generic $V$-ultrafilter on $\omega$ containing
$\mathcal I^*$, which will be denoted $\mathcal{G}^{\mathcal I}_{\gen}$.
The superscript will be omitted when $\mathcal I$ is apparent from the context.

A family $\mathcal X$ is \emph{$\omega$-hitting} (also called \emph{$\omega$-tall})
if for each countable sequence $\os A_n \in {[\omega]}^\omega \mid n \in \omega \cs$
exists $X \in \mathcal X$ such that $A_n \cap X$ is infinite for each $n \in \omega$.
A family $\mathcal X$ is \emph{$\omega$-splitting}
if for each countable sequence $\os A_n \in {[\omega]}^\omega \mid n \in \omega \cs$
exists $X \in \mathcal X$ such that both $A_n \cap X$ and $A_n \setminus X $
are infinite for each $n \in \omega$.

To conclude the preliminaries let us recall a useful characterization of F$_{\sigmaup}$ ideals.
A \emph{lower semicontinuous submeasure} is a function
$\varphi \colon \pw{\omega} \to \left[ 0, \infty \right]$ such that
$\varphi \left( \emptyset \right) = 0$;
if $A\subseteq B$, then $\varphi\left( A\right) \leq \varphi \left( B\right)$ (monotonicity);
$\varphi \left( A\cup B\right) \leq \varphi \left( A\right) +\varphi \left( B\right)$ (subadditivity);
and $\varphi \left( A\right) = \sup \os \varphi \left( A\cap n\right) \mid n \in \omega \cs$
for every $A \subseteq \omega$ (lower semicontinuity).

\begin{proposition}[Mazur]\label{prop:mazur}
Let $\mathcal{I}$ be an F$_{\sigmaup}$ ideal on $\omega$.
There is a lower semicontinuous submeasure $\varphi$ 
such that $\varphi\left( \os n \cs \right) = 1$ for
every $n\in\omega$, and $\mathcal{I} =  \operatorname{fin} \left(  \varphi\right)$.
\end{proposition}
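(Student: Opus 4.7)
The plan is to construct $\varphi$ directly from a suitably normalized $F_{\sigmaup}$-decomposition of $\mathcal{I}$. First I would rewrite $\mathcal{I} = \bigcup_{n \geq 1} F_n$ so that each $F_n$ is closed in $\pw{\omega}$ (viewed as $2^{\omega}$ with the product topology), hereditary, and contained in $F_{n+1}$; moreover $\os n \cs \in F_1$ for every $n \in \omega$, and whenever $A \in F_n$ and $B \in F_m$ then $A \cup B \in F_{n+m}$. This can be arranged, starting from any increasing closed cover of $\mathcal{I}$, by standard closure operations: pass to hereditary hulls, then to $n$-fold-union hulls $\os A \mid A \subseteq B_1 \cup \dots \cup B_n,\, B_i \in F_n \cs$, and finally adjoin all singletons (together with $\emptyset$) at level~$1$. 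Each operation preserves closedness by a sequential-compactness argument in $2^{\omega}$.

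With $F_0 = \os \emptyset \cs$, I would then set
\[
\varphi(A) = \sup_{k \in \omega} \min \os n \mid A \cap k \in F_n \cs.
\]
Monotonicity and $\varphi(\emptyset) = 0$ are immediate from heredity. Subadditivity follows from the union property of the $F_n$: if $A \cap k \in F_{n_k}$ and $B \cap k \in F_{m_k}$, then $(A \cup B) \cap k \in F_{n_k + m_k}$, whence $\varphi(A \cup B) \leq \varphi(A) + \varphi(B)$. Lower semicontinuity is a short calculation using $(A \cap k) \cap j = A \cap \min(j, k)$. The normalization yields $\varphi(\os n \cs) = 1$ since $\os n \cs \notin F_0$ but $\os n \cs \in F_1$. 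Finally, if $A \in F_m$ then heredity gives $\varphi(A) \leq m$; conversely, if $\varphi(A) = m < \infty$ then $A \cap k \in F_m$ for every $k$, and since $A \cap k \to A$ in $2^{\omega}$ and $F_m$ is closed we conclude $A \in F_m \subseteq \mathcal{I}$. This establishes $\operatorname{fin}(\varphi) = \mathcal{I}$.

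The main technical obstacle is the normalization step, where heredity, the union property, and the presence of all singletons at level~$1$ must all be achieved simultaneously without destroying closedness of the individual $F_n$. Each modification ultimately rests on sequential compactness of $2^{\omega}$ to extract convergent witnesses from sequences of sets, and the bookkeeping of checking that the properties remain compatible after each step is the only delicate part of the argument. Once the normalized decomposition is in place, the remaining verifications are mechanical, with lower semicontinuity in particular emerging automatically from the outer $\sup_k$ in the definition of $\varphi$.
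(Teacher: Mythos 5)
The paper does not prove this proposition at all: it is recalled in the preliminaries as a classical result of Mazur, so there is no in-paper argument to compare against. What you have written is the standard Mazur proof, and the analytic part is sound. The formula $\varphi(A)=\sup_{k}\min\os n \mid A\cap k\in F_n\cs$ is exactly Mazur's submeasure; your verifications of monotonicity, subadditivity, lower semicontinuity (via $(A\cap k)\cap j=A\cap\min(j,k)$), the normalization $\varphi(\os n\cs)=1$, and $\operatorname{fin}(\varphi)=\mathcal I$ (heredity for one inclusion, closedness of $F_m$ together with $A\cap k\to A$ for the other) are all correct, as are the compactness arguments showing that hereditary hulls and $n$-fold union hulls of closed subsets of $2^{\omega}$ are closed.

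The one genuine slip is the \emph{order} of the closure operations in the normalization. If, as written, you form the $n$-fold union hulls first and adjoin the singletons only afterwards and only at level $1$, the union property already fails for two singletons: $\os a\cs,\os b\cs\in F_1$, but $\os a,b\cs$ need not belong to the $2$-fold union hull of the hereditary hull of $C_2$ (take $C_1=C_2=\os\emptyset\cs$, which is a legitimate start of an increasing closed cover), and the levels also stop being increasing, since the singletons sit in $F_1$ but not necessarily in $F_2$. This matters because your proof of subadditivity uses precisely that property. The repair is routine and standard: adjoin $\emptyset$ and all singletons to \emph{every} level of the hereditary cover first (this keeps each level closed, since a convergent sequence of singletons converges to a singleton or to $\emptyset$), and only then pass to the $n$-fold union hulls. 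With that reordering all the properties you list hold simultaneously and the rest of your argument goes through verbatim.
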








\section{Mathias like reals and summable ideals}~\label{sec:Summable}
\nopagebreak

The original motivation for this section comes from a question of Ilijas Farah
about the number of $\mathsf{ZFC}$-provably distinct Boolean algebras of the form $\pw{\omega}/\mathcal I$
where $\mathcal I$ is a `definable' ideal~\cite{farah-question}.
Note that $\mathsf{CH}$ implies that all such Boolean algebras 
are isomorphic for F$_\sigmaup$ ideals $\mathcal I$~\cite{just-krawczyk}.
The interpretation of `definability' interesting in this context might be `F$_{\sigmaup\deltaup}$,' `Borel,' or `analytic.'
The basic question was answered by Oliver~\cite{oliver} 
by showing that there are $2^\omega$ many F$_{\sigmaup\deltaup}$ ideals for which
the Boolean algebras $\pw{\omega}/\mathcal I$ are provably nonisomorphic.
However, these constructions are not interesting from the forcing point of view,
the constructed examples are locally isomorphic to $\pw{\omega}/\fin$.
On the other hand, Stepr\={a}ns~\cite{steprans} showed that there are
continuum many coanalytic ideals whose quotients are pairwise forcing not
equivalent.

We are interested in (anti-)classification results about forcings of this form.
The first result in this direction is due to Farah ad Solecki. They showed that the Boolean algebras
$\pw{\mathbb Q}/\textup{nwd}_{\mathbb Q}$ and $\pw{\mathbb Q}/\textup{null}_{\mathbb Q}$
are nonisomorphic and homogeneous, see~\cite{farah-solecki}.
A systematic study of such forcing notions was done by Hru\v{s}\'ak and Zapletal~\cite{hrusak-zapletal}.
They provided several examples of forcings of this form.
Their results imply that for each tall summable ideal
$\mathcal I$ there is an F$_{\sigmaup\deltaup}$ ideal denoted here $\tr_\mathcal I$
such that $\pw{\omega}/\tr_\mathcal I = \mathbb M (\mathcal I^*) \iter \mathbb Q$
for some $\mathbb Q$, a name for a proper $\omega$-distributive forcing notion.
Therefore showing that the Mathias forcings $\mathbb M (\mathcal I^*)$ are
different for various choices of summable ideals $\mathcal I$ seems to be a viable attempt
to provide a spectrum of different forcings $\pw{\omega}/\tr_\mathcal I$.
However, the results of this section show that this approach is likely to fail,
the Mathias forcings for tall summable ideals all mutually bi-embeddable.

Let us start with a general combinatorial characterization of Mathias generic reals.

\begin{definition}\label{Mathias-like-defin}
	Let $V \subseteq U$ be models of the set theory, $\mathcal F \subset \pw{\omega}$
	be a filter in $V$, and $x \in \pw{\omega} \cap U$.
	We say that $x$ is a \emph{Mathias like real for $\mathcal F$ over $V$} if the following two conditions hold;
	\begin{enumerate}
		\item\label{Mathias-like-small} $x \subset^* F$ for each $F \in \mathcal F \cap V$,
		\item\label{Mathias-like-big} ${[x]}^{<\omega} \cap H \neq \emptyset$ for each $H \in {\mathcal{F}^{<\omega}}^+ \cap V$.
	\end{enumerate}
\end{definition}

Notice that an $\mathbb M(\mathcal F)$ generic real is a Mathias like for $\mathcal F$.
It was implicitly shown in~\cite{hrusak-minami} that Mathias
like reals are already almost Mathias generic
-- it is sufficient to add a Cohen real to get the genericity.
This explains why most results concerning the Mathias forcing rely just on the fact
that the generic reals are Mathias like.
We provide the proof of this fact for reader's convenience.

\begin{proposition}\label{prop:Mat-like_is_Mat}
	Let $V \subseteq U$ be models of the set theory, $\mathcal F \subset \pw{\omega}$
	be a filter in $V$, and $x \in \pw{\omega} \cap U$ be a Mathias like real for $\mathcal F$ over $V$.
	Let $c$ be a $\mathbb C_x$ generic real over $U$.
	Then $c$ is an $\mathbb M(\mathcal F)$ generic real over $V$.
\end{proposition}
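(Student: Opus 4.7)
The plan is to show that $G_c(\mathcal F)$ meets every dense open $D \in V$ of $\mathbb M(\mathcal F)$. The idea is to transfer $D$ into a dense set $E_D \subseteq \mathbb C_x$ (defined in $U$) whose conditions force a specific pair $(s, A) \in D$ into $G_c(\mathcal F)$; Cohen genericity of $c$ over $U$ then does the rest.

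First I would extract a combinatorial observation inside $V$: for each $t \in \fin$ (and $\fin \subseteq V$, since finite subsets of $\omega$ are absolute),
\[
H_t := \os a \in \fin \mid t < a \text{ and } (t \cup a, A) \in D \text{ for some } A \in \mathcal F \cs
\]
belongs to ${\mathcal{F}^{<\omega}}^+$. Indeed, given $F \in \mathcal F$, density of $D$ extends $(t, F)$ to some $(s, A) \in D$, and then $a := s \setminus t \subseteq F$ witnesses $a \in H_t$. Clause~(\ref{Mathias-like-big}) of the Mathias-like property then supplies, for every such $t$, some $a \subseteq x$ in $H_t$ with a witness $A \in \mathcal F \cap V$.

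Next, working in $U$, I would define
\[
E_D := \os p \in \mathbb C_x \mid \exists (s, A) \in D \cap V,\ s \sqsubseteq p,\ p \setminus s \subseteq A,\ \max p > \max(x \setminus A) \cs,
\]
noting that $x \setminus A$ is finite by clause~(\ref{Mathias-like-small}). For density: given $q \in \mathbb C_x$, apply the previous step to $H_q$ to obtain $a \subseteq x$ with $q < a$ and $(q \cup a, A) \in D$, and pick $y \in x$ above both $\max(q \cup a)$ and $\max(x \setminus A)$ (such $y$ exists because $x$ must be infinite and $x \subset^* A$). Setting $p := q \cup a \cup \os y \cs$ gives $q \sqsubseteq p$ in $\mathbb C_x$, while the only new point $y$ past the prospective stem $s := q \cup a$ lies in $x \cap (\max(x \setminus A), \infty) \subseteq A$; hence $(s, A)$ witnesses $p \in E_D$.

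By Cohen genericity of $c$ over $U$, some $p \in E_D$ satisfies $p \sqsubseteq c$. For the witnessing $(s, A)$ then $s \sqsubseteq p \sqsubseteq c$, and
\[
c \setminus s = (p \setminus s) \cup (c \setminus p) \subseteq A \cup \mleft( x \cap (\max p, \infty) \mright) \subseteq A,
\]
so $(s, A) \in G_c(\mathcal F) \cap D$. The crux of the argument is to arrange that $c \setminus s$ lands \emph{exactly}, not merely almost, inside the particular $A$ produced by density of $D$; this is precisely the role of clause~(\ref{Mathias-like-small}) of the Mathias-like property, which lets the finite error set $x \setminus A$ be absorbed into the stem via a suitable choice of $y$.
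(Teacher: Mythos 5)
Your proof is correct and takes essentially the same route as the paper's: your set $H_t$ is exactly the paper's $\mathcal F$-universal set $H$ of tails of conditions in $\mathscr D$ extending a given Cohen condition, your point $y$ plays the role of the paper's $k$ chosen via $x \subset^* F_t$, and your $E_D$ is just the paper's ``set of conditions forcing $G_c(\mathcal F) \cap \mathscr D \neq \emptyset$'' written out explicitly. No gaps.
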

\begin{proof}
	We need to prove that $G_c(\mathcal F) \cap \mathscr D \neq \emptyset$
	for each dense subset $\mathscr D \in V$ of $\mathbb M(\mathcal F)$,
	i.e.\ to show that the set of conditions forcing this fact is dense in $\mathbb C_x$.
	Choose any condition $s \in \mathbb C_x$. 
	Denote \[H = \os t \setminus s \mid  \left(\exists F \in \mathcal F \mid (t, F) \in \mathscr D\right),  s \sqsubseteq t \cs.\]
	Note that $H \in {{\mathcal F}^{<\omega}}^+$, otherwise there exists $F \in \mathcal F$ such that ${[F]}^{<\omega} \cap H = \emptyset$,
	and the condition $(s,F)$ has no extension in $\mathscr D$.
	Condition~(\ref{Mathias-like-big}) of Definition~\ref{Mathias-like-defin} now implies that
	there exists $(t, F_t) \in \mathscr D$ such that $s \sqsubseteq t$, and $t \setminus s \subset x$.
	Since $x \subset^* F_t$, there is $k \in x$, $t < k$ such that $x \setminus k \subset F_t$.
	Hence $t \cup \os k \cs \in \mathbb C_x$, $t \cup \os k \cs < s$,
	and $t \cup \os k \cs \Vdash (t, F_t) \in G_c({\mathcal F}) \cap \mathscr D$.
\end{proof}

For a poset $P$ we denote by $\RO{P}$ the unique
(up to isomorphism) complete Boolean algebra in which $P$
densely embeds (while preserving incompatibility), 
and $\RO{P}^+$ denotes the set of non-zero elements of $\RO{P}$.
The relation $\coem$ denotes complete embedding of Boolean algebras.

\begin{corollary}\label{Mathias-embedds}
	Let $\mathbb P$ be a forcing adding a Mathias like real for a filter $\mathcal F$.
	\begin{enumerate}
		\item $\RO{\mathbb M(\mathcal F)} \lessdot \RO{\mathbb P \times \mathbb C}$.
		\item If $\mathbb Q$ is a forcing adding a Cohen real,
		then $\RO{\mathbb M(\mathcal F)} \lessdot \RO{\mathbb P \times \mathbb Q}$.
	\end{enumerate}
\end{corollary}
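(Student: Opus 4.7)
The plan is to reduce both parts to Proposition~\ref{prop:Mat-like_is_Mat} via the observation that $\mathbb P \iter \dot{\mathbb C}_{\dot x}$ adds an $\mathbb M(\mathcal F)$ generic real over $V$, while at the same time this iteration is forcing-equivalent to the product $\mathbb P \times \mathbb C$. Let $\dot x$ be a $\mathbb P$-name for the Mathias like real added by $\mathbb P$.

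For the absorption, note that after forcing with $\mathbb P$ the realization $x$ of $\dot x$ is an infinite subset of $\omega$, so $\mathbb C_x$ is a countable atomless poset, hence forcing-isomorphic to $\mathbb C$ via any bijection $x \to \omega$ in the extension. Since $\mathbb C$ is absolutely defined, this gives $\RO{\mathbb P \iter \dot{\mathbb C}_{\dot x}} \cong \RO{\mathbb P \iter \mathbb C} \cong \RO{\mathbb P \times \mathbb C}$. On the other hand, if $G$ is $\mathbb P$-generic over $V$ and $c$ is $\mathbb C_x$-generic over $V[G]$, then Proposition~\ref{prop:Mat-like_is_Mat} says that $c$ is $\mathbb M(\mathcal F)$ generic over $V$. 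By the standard correspondence between generic reals and complete embeddings, this amounts to $\RO{\mathbb M(\mathcal F)} \coem \RO{\mathbb P \iter \dot{\mathbb C}_{\dot x}}$. Combining the two identifications proves~(1).

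For~(2), the assumption that $\mathbb Q$ adds a Cohen real is equivalent to $\RO{\mathbb C} \coem \RO{\mathbb Q}$. Products preserve complete embeddings (take the identity in the $\mathbb P$-coordinate), so $\RO{\mathbb P \times \mathbb C} \coem \RO{\mathbb P \times \mathbb Q}$, and composing with~(1) gives the desired $\RO{\mathbb M(\mathcal F)} \coem \RO{\mathbb P \times \mathbb Q}$.

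The only step that needs any justification is the absorption $\mathbb P \iter \dot{\mathbb C}_{\dot x} \equiv \mathbb P \times \mathbb C$, which is the routine argument that a name for Cohen forcing on an infinite set collapses to plain Cohen forcing; everything else is an immediate application of Proposition~\ref{prop:Mat-like_is_Mat} and standard facts about Boolean completions of product forcings.
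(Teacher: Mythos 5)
There is a genuine gap at the pivotal step of part~(1): the claim that ``$c$ is $\mathbb M(\mathcal F)$ generic over $V$'' \emph{amounts to} $\RO{\mathbb M(\mathcal F)} \coem \RO{\mathbb P \iter \dot{\mathbb C}_{\dot x}}$. The standard correspondence you are invoking is weaker than this. If every generic extension by a forcing $\mathbb R$ contains a $V$-generic filter on $\mathbb M(\mathcal F)$, then the map $p \mapsto \norm{\check p \in \dot G}$ preserves incompatibility and maximal antichains, but it need not be non-zero on every condition; one therefore only obtains $\RO{\mathbb M(\mathcal F)}\restriction a \coem \RO{\mathbb R}$ for \emph{some} $a \in \RO{\mathbb M(\mathcal F)}^+$. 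In the present situation this is not a mere technicality: the derived filter $G_c(\mathcal F)$ contains only conditions $(s,A)$ with $s \sqsubseteq c \subseteq x$, so every $(s,A)$ for which $\mathbb P$ forces $\check s \not\subseteq \dot x$ receives Boolean value $0$. For instance, if $\mathcal F$ contains the set $E$ of even numbers and $\mathbb P$ happens to force $\dot x \subseteq E$ (which is compatible with $\dot x$ being Mathias like for $\mathcal F$), then all conditions $(s,A)$ with $s \not\subseteq E$ are annihilated, and the naive map is not a complete embedding of the whole algebra.

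Closing exactly this gap is where all the work in the paper's proof lies. It first notes that $\mathbb M(\mathcal F)\restriction p$ is isomorphic to $\mathbb M(\mathcal F\restriction F)$ for some $F\in\mathcal F$, and that a Mathias like real for $\mathcal F$ is also Mathias like for each $\mathcal F\restriction F$; hence the set of $a\in\RO{\mathbb M(\mathcal F)}^+$ with $\RO{\mathbb M(\mathcal F)}\restriction a \coem \RO{\mathbb P\times\mathbb C}$ is \emph{dense}. Since $\mathbb M(\mathcal F)$ is c.c.c., one can choose a countable maximal antichain $A$ of such elements and compute
\[
\RO{\mathbb M(\mathcal F)} \simeq \prod_{a\in A}\RO{\mathbb M(\mathcal F)}\restriction a \coem \prod_\omega \RO{\mathbb P\times\mathbb C} \simeq \RO{\mathbb P\times\textstyle\sum_\omega\mathbb C} \simeq \RO{\mathbb P\times\mathbb C}.
\]
Some version of this density-plus-absorption argument is needed to pass from the restricted embedding to the full one, and it is absent from your write-up. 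Your identification of $\mathbb P \iter \dot{\mathbb C}_{\dot x}$ with $\mathbb P\times\mathbb C$ and your deduction of part~(2) from part~(1) are fine, although in~(2) the hypothesis again only yields $\RO{\mathbb C}\restriction a \coem \RO{\mathbb Q}$ for some $a$, and you should add the (easy, but necessary) remark that $\RO{\mathbb C}\restriction a$ is isomorphic to $\RO{\mathbb C}$.
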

\begin{proof}
	Proposition~\ref{prop:Mat-like_is_Mat} implies that 
	every generic extension via $\mathbb P \iter \dot{\mathbb C}$ 
	contains a generic filter on $\mathbb M (\mathcal F)$ over $V$. 
	Hence there is
	$a \in \RO{\mathbb M (\mathcal F)}^+$ such that
	\begin{equation}\label{eq:emb}\tag*{$\circledast$}
	\RO{\mathbb M (\mathcal F)} \restriction a \coem
	\RO{\mathbb P \iter \dot{\mathbb C}} = \RO{\mathbb P \times \mathbb C},
	\end{equation}
	see e.g.~\cite{vopenka}.
	For each $p \in \mathbb M(\mathcal F)$, the poset
	$\mathbb M(\mathcal F)\restriction p$ is isomorphic to
	$\mathbb M(\mathcal F \restriction F)$ for some $F \in \mathcal F$.
	If $x$ is a Mathias like real for $\mathcal F$, then it is also Mathias like for
	$\mathcal F \restriction F$ for each $F \in \mathcal F$,
	and we can deduce from Proposition~\ref{prop:Mat-like_is_Mat} that the set
	of elements of $\RO{\mathbb M (\mathcal F)}^+$ satisfying~\ref{eq:emb} is dense.
	Since $\mathbb M (\mathcal F)$ is c.c.c.\ we can find $A$, a countable maximal antichain
	of such elements.
	Now
	\begin{multline*}
	\RO{\mathbb M (\mathcal F)} \simeq \prod_{a \in A} \RO{\mathbb M (\mathcal F)}\restriction a
	\lessdot \prod_\omega \RO{\mathbb P \times \mathbb C} \simeq \\
	\simeq \RO{\mathbb P \times \sum_\omega \mathbb C}
	\simeq \RO{\mathbb P \times \mathbb C}.
	\end{multline*}
	To justify the second last isomorphism, we construct a dense embedding $e$ 
	of the poset $\mathbb P \times \sum_\omega \mathbb C$ 
	into the complete Boolean algebra $\prod_\omega \RO{\mathbb P \times \mathbb C}$: 
	If $t$ is an element of the $n$-th copy of $\mathbb C$ in $\sum_\omega \mathbb C$, 
	define $e(p,t)(i) = (p,t)$ if $i = n$, and $e(p,t)(i) = \mathbf{0}$ otherwise.

	If $\mathbb Q$ adds a Cohen generic real, then there exists some
	$a \in \RO{\mathbb C}^+$ such that $\RO{\mathbb C} \restriction a \lessdot \RO{\mathbb Q}$.
	Since $\RO{\mathbb C} \restriction a$ is isomorphic to $\RO{\mathbb C}$,
	the second statement follows from the first one.
\end{proof}

The next lemma states that Mathias like reals behave well
with respect to the Rudin--Keisler ordering on filters.

\begin{lemma}\label{RK-Ml_tranfer}
	Let $\mathcal E, \mathcal F$ be filters on $\omega$,
	let $f\colon \omega \to \omega$ be a function witnessing $\mathcal F \leq_\mathrm{RK} \mathcal E$,
	and $x$ be a Mathias like real for $\mathcal E$. Then $f[x]$ is a Mathias like real for $\mathcal F$.
\end{lemma}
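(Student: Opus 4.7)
The plan is a direct verification of the two clauses of Definition~\ref{Mathias-like-defin} for $f[x]$ and $\mathcal F$, both carried out by pushing forward/pulling back along $f$ and using the Rudin--Keisler equivalence $F \in \mathcal F \iff f^{-1}[F] \in \mathcal E$.

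For clause~(\ref{Mathias-like-small}), I would fix $F \in \mathcal F \cap V$. Then $f^{-1}[F] \in \mathcal E \cap V$, so by property~(\ref{Mathias-like-small}) of $x$ with respect to $\mathcal E$ we have $x \subset^* f^{-1}[F]$. Applying $f$ gives $f[x] \setminus F \subseteq f\mleft[ x \setminus f^{-1}[F] \mright]$, which is finite, hence $f[x] \subset^* F$.

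For clause~(\ref{Mathias-like-big}), given $H \in {\mathcal{F}^{<\omega}}^+ \cap V$, the idea is to pull $H$ back to an $\mathcal E$-universal family and use property~(\ref{Mathias-like-big}) for $x$. I would define, inside $V$,
\[
	H' = \os a \in \fin \mid f[a] \in H \cs,
\]
and claim $H' \in {\mathcal{E}^{<\omega}}^+$. The key observation here is that $f[E] \in \mathcal F$ for every $E \in \mathcal E$: indeed $f^{-1}[\omega \setminus f[E]] \subseteq \omega \setminus E \in \mathcal E^*$, so $\omega \setminus f[E] \in \mathcal F^*$ by the Rudin--Keisler property, i.e.\ $f[E] \in \mathcal F$. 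Now, given $E \in \mathcal E$, pick $b \in H$ with $b \subseteq f[E]$, and choose $a \subseteq E$ with $f[a] = b$; then $a \in H'$ witnesses $\mathcal E$-universality. Applying property~(\ref{Mathias-like-big}) for $x$ to $H'$ produces $a \in {[x]}^{<\omega} \cap H'$, and $f[a] \in {[f[x]]}^{<\omega} \cap H$ is the required witness.

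There is no real obstacle; the only point deserving care is the Rudin--Keisler computation showing $f[E] \in \mathcal F$ for $E \in \mathcal E$, which is what lets the pulled-back family $H'$ remain $\mathcal E$-universal. Note that the argument uses $f$ only through the Rudin--Keisler property and never requires $f$ to be finite-to-one, so the same proof will subsume the Rudin--Blass case.
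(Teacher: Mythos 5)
Your proof is correct and follows essentially the same route as the paper: the set $H' = \os a \in \fin \mid f[a] \in H \cs$ you pull back is exactly the paper's $\bigcup f^*[H]$, and the universality argument (via $f[E] \in \mathcal F$ for $E \in \mathcal E$) and the final push-forward of the witness are identical. You merely spell out clause~(\ref{Mathias-like-small}) and the computation $f[E] \in \mathcal F$, which the paper leaves as obvious.
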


\begin{proof}
It is obvious that $f[x] \subset^* F$ for each $F \in \mathcal F$,
so we need to check only condition~(\ref{Mathias-like-big}) of Definition~\ref{Mathias-like-defin}.
Define $f^*\colon \fin \to \fin$ by
\[f^*(h) = \os a \in \fin \mid f[a] = h \cs.\]
\begin{claim*}
	If $H \in {{\mathcal F}^{<\omega}}^+$, then $\bigcup f^*[H] \in {{\mathcal E}^{<\omega}}^+$.
\end{claim*}
For $E \in \mathcal E$ is $f[E] \in \mathcal F$, and there is $h \in H$ such that $h \subset f[E]$.
Thus $f^*(h) \cap {[E]}^{<\omega} \neq \emptyset$.
\claimdone\medskip
Choose any $H \in {{\mathcal F}^{<\omega}}^+$. Since $x$ is Mathias like for $\mathcal E$,
there exists $a \in \bigcup f^*[H]$ such that $a \subset x$.
Now $f[a] \subset f[x]$ and $f[a] \in H$.
\end{proof}

We focus now on summable ideals.
The following simple observation appears in~\cite{farah-quotients}.

\begin{lemma}\label{Farah-claim}
	Let $\mathcal I$, $\mathcal J$ be tall summable ideals. There exists $A \in \pw{\omega} \setminus \mathcal J^*$
	such that $\mathcal I \leq_\mathrm{RB} \mathcal J \restriction A$.
\end{lemma}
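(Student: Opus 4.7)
The plan is to exploit the flexibility of summable ideals: both $\mathcal I$ and $\mathcal J$ are specified by weight functions $\mu, \nu \colon \omega \to (0,\infty)$ with $\mu(n),\nu(n) \to 0$ and $\sum_n \mu(n) = \sum_n \nu(n) = \infty$ (the last condition being equivalent to properness, the limit condition to tallness). I would build the required finite-to-one map $f$ by packaging the $\nu$-weights into finite blocks whose total $\nu$-mass approximates $\mu(n)$, while holding enough mass in reserve to guarantee the complement is $\mathcal J$-positive.

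First I would split $\omega = Y_0 \sqcup Y_1$ into two sets with $\sum_{j \in Y_i} \nu(j) = \infty$ for both $i$; this is possible because $\nu(j) \to 0$, so one can alternately assign elements as the partial sums grow. The set $Y_1$ will eventually lie in $\omega \setminus A$, ensuring $A \notin \mathcal J^*$. Next I would recursively construct pairwise disjoint finite blocks $B_n \subseteq Y_0$ satisfying
\[
\mu(n) \leq \sum_{j \in B_n} \nu(j) \leq 2\mu(n).
\]
At stage $n$, only finitely many elements of $Y_0$ have been used, so $Y_0 \setminus \bigcup_{k<n} B_k$ still has infinite $\nu$-sum, and because $\nu(j) \to 0$ all but finitely many of its elements satisfy $\nu(j) < \mu(n)$. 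Greedily adjoining such small-weight elements one at a time, the running sum grows by less than $\mu(n)$ per step and is unbounded, so at the first moment it exceeds $\mu(n)$ it still lies below $2\mu(n)$; this yields $B_n$.

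Setting $A = \bigcup_n B_n$ and defining $f\colon A \to \omega$ by $f(j) = n$ iff $j \in B_n$, the map $f$ is finite-to-one with $A \subseteq Y_0$, hence $\omega \setminus A \supseteq Y_1 \in \mathcal J^+$, i.e.\ $A \notin \mathcal J^*$. For any $I \subseteq \omega$,
\[
\sum_{n \in I} \mu(n) \leq \sum_{j \in f^{-1}[I]} \nu(j) = \sum_{n \in I}\sum_{j \in B_n}\nu(j) \leq 2\sum_{n \in I}\mu(n),
\]
so $f^{-1}[I] \in \mathcal J$ iff $I \in \mathcal I$, and since $f^{-1}[I] \subseteq A$, this reads $f^{-1}[I] \in \mathcal J \restriction A$ iff $I \in \mathcal I$. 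Thus $f$ witnesses $\mathcal I \leq_{\mathrm{RB}} \mathcal J \restriction A$.

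The only subtle point is the joint bookkeeping in the recursion: one must simultaneously keep the blocks $B_n$ bounded above by $2\mu(n)$ (requiring individual weights $\nu(j) < \mu(n)$) and keep the unused portion $\nu$-divergent. Both follow from $\nu(j) \to 0$ together with $\sum \nu = \infty$, and the preliminary split into $Y_0, Y_1$ makes the second requirement automatic, so there is no real obstacle beyond this careful weight-matching.
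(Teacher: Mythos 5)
Your proof is correct: the preliminary split of $\omega$ into two $\nu$-divergent halves secures $A \notin \mathcal J^*$, the greedy block construction with the two-sided bound $\mu(n) \leq \sum_{j \in B_n}\nu(j) \leq 2\mu(n)$ is sound (using only tallness of $\mathcal J$, i.e.\ $\nu(j)\to 0$), and the displayed inequality immediately gives the required equivalence for the finite-to-one map. The paper itself supplies no proof---it cites Farah's \emph{Analytic Quotients}---and your weight-matching argument is essentially the standard one given there, so there is nothing to compare beyond noting that your write-up fills in the omitted details correctly.
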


We are now equipped to prove the bi-embeddability result.

\begin{theorem}
	Let $\mathcal I$, $\mathcal J$ be tall summable ideals.
	Then $\RO{\mathbb M(\mathcal I)}$ is completely embedded in $\RO{\mathbb M(\mathcal J)}$.
\end{theorem}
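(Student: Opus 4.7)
The plan is to reduce to a Rudin--Blass comparison via Lemma~\ref{Farah-claim}, extract from the $\mathbb{M}(\mathcal{J}^*)$-generic real a Mathias like real for $\mathcal{I}^*$, and then upgrade the resulting embedding---which a priori carries a Cohen factor from Corollary~\ref{Mathias-embedds}---to a direct complete embedding. Throughout I read $\mathbb{M}(\mathcal{I})$ and $\mathbb{M}(\mathcal{J})$ as the Mathias--Prikry forcings associated with the dual filters $\mathcal{I}^*$ and $\mathcal{J}^*$, matching the usage in the introduction of this section. The first step is to invoke Lemma~\ref{Farah-claim} to fix $A \in \pw{\omega} \setminus \mathcal{J}^*$ and a finite-to-one $f \colon A \to \omega$ witnessing $\mathcal{I} \leq_\mathrm{RB} \mathcal{J} \restriction A$.

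Next I would show that if $y$ is an $\mathbb{M}(\mathcal{J}^*)$-generic real, then $z = f[y \cap A]$ is Mathias like for $\mathcal{I}^*$ over the ground model. For Definition~\ref{Mathias-like-defin}(\ref{Mathias-like-small}): given any $G \in \mathcal{I}^* \cap V$, the set $F = f^{-1}[G] \cup (\omega \setminus A)$ has complement $f^{-1}[\omega \setminus G] \cap A$, which lies in $\mathcal{J}$ by the Rudin--Blass witness, so $F \in \mathcal{J}^*$; from $y \subset^* F$ one reads off $y \cap A \subset^* f^{-1}[G]$, whence $z \subset^* G$. For Definition~\ref{Mathias-like-defin}(\ref{Mathias-like-big}): applying the claim inside the proof of Lemma~\ref{RK-Ml_tranfer}, with $f$ viewed as a Rudin--Blass map from $(\mathcal{J} \restriction A)^*$ to $\mathcal{I}^*$, yields that for every $H \in {(\mathcal{I}^*)^{<\omega}}^+$ the set $\bigcup f^*[H] \subseteq [A]^{<\omega}$ lies in ${((\mathcal{J} \restriction A)^*)^{<\omega}}^+$. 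Since every $F \in \mathcal{J}^*$ satisfies $F \cap A \in (\mathcal{J} \restriction A)^*$, this same set is then also in ${(\mathcal{J}^*)^{<\omega}}^+$, and Mathias likeness of $y$ supplies $a \subset y$ with $a \in \bigcup f^*[H]$; automatically $a \subset A$, so $a \subset y \cap A$ and $f[a] \in H \cap [z]^{<\omega}$.

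Mathias likeness of $z$ combined with Corollary~\ref{Mathias-embedds}(1) immediately yields $\RO{\mathbb{M}(\mathcal{I}^*)} \coem \RO{\mathbb{M}(\mathcal{J}^*) \times \mathbb{C}}$, so the substantive task left is to strip off the parasitic Cohen factor on the right; it cannot simply be absorbed into $\mathbb{M}(\mathcal{J}^*)$, since the latter is $\sigma$-centered and therefore adds no Cohen real. The route I would pursue is to bypass the appeal to Proposition~\ref{prop:Mat-like_is_Mat} and construct a direct projection $\Phi \colon \mathbb{M}(\mathcal{J}^*) \to \mathbb{M}(\mathcal{I}^*)$ defined by $\Phi(s, F) = (f[s \cap A], f[F \cap A])$, with the projection property verified by a stem-extension argument: given $(t, G) \leq \Phi(s, F)$, one lifts the elements of $t \setminus f[s \cap A] \subset f[F \cap A]$ to pre-images in $F \cap A$ above $\max s$ and takes $F' = F \cap (f^{-1}[G] \cup (\omega \setminus A))$, which lies in $\mathcal{J}^*$ by the same Rudin--Blass computation as in paragraph~two. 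The main obstacle is that the Mathias ordering uses $\sqsubseteq$---initial segments in the natural order of $\omega$---so order preservation of $\Phi$ requires $f$ to send $\sqsubseteq$-initial segments of $A$ to $\sqsubseteq$-initial segments of $f[A]$; the cleanest remedy is to strengthen Lemma~\ref{Farah-claim} so as to produce an $f$ that is additionally monotone on $A$ (the Farah construction leaves enough slack to enumerate fibres in the prescribed order and then thin $A$ to a further positive subset), or alternatively to descend to the dense subforcing of $\mathbb{M}(\mathcal{J}^*)$ consisting of those $(s, F)$ on which $f$ respects the natural order.
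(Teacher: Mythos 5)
Your first two paragraphs are sound and essentially reproduce the paper's opening moves (Lemma~\ref{Farah-claim} plus the transfer of Mathias likeness along the Rudin--Blass map, as in Lemma~\ref{RK-Ml_tranfer}), arriving correctly at $\RO{\mathbb M(\mathcal I^*)} \coem \RO{\mathbb M(\mathcal J^*) \times \mathbb C}$. The gap is in how you deal with the Cohen factor. Your stated reason that it ``cannot simply be absorbed'' --- that $\mathbb M(\mathcal J^*)$ is $\sigmaup$-centered and therefore adds no Cohen real --- is false: $\sigmaup$-centeredness does not preclude adding Cohen reals (Cohen forcing itself is $\sigmaup$-centered), and indeed $\mathbb M(\mathcal F)$ adds a Cohen real whenever $\mathcal F$ is not an ultrafilter. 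This is exactly the observation the argument needs. Since $A \notin \mathcal J^*$, the complement $\omega \setminus A$ is $\mathcal J$-positive and one has the product decomposition $\mathbb M(\mathcal J^*) = \mathbb M(\mathcal J^*\restriction A) \times \mathbb M(\mathcal J^*\restriction (\omega \setminus A))$. The first factor adds the Mathias like real for $\mathcal I^*$ (by your paragraph two), the second adds a Cohen real because $\mathcal J^*\restriction(\omega\setminus A)$ is a Borel filter, hence not an ultrafilter. Corollary~\ref{Mathias-embedds}(2), applied with $\mathbb P = \mathbb M(\mathcal J^*\restriction A)$ and $\mathbb Q = \mathbb M(\mathcal J^*\restriction(\omega\setminus A))$, then yields $\RO{\mathbb M(\mathcal I^*)} \coem \RO{\mathbb M(\mathcal J^*)}$ with no parasitic factor. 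There is nothing to strip off.

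The replacement route you propose instead does not work as described. Beyond the fact that the monotone strengthening of Lemma~\ref{Farah-claim} is asserted rather than proved, the map $\Phi(s,F) = (f[s\cap A], f[F\cap A])$ fails the projection property even for monotone $f$: if $(t,G) \leq \Phi(s,F)$ and $m \in t \setminus f[s\cap A]$, all you know is that $f^{-1}(m) \cap F \cap A$ is a nonempty \emph{finite} set, and every element of it may lie below $\max s$ (e.g.\ when $s$ contains a large element of $\omega\setminus A$, or a large element of $A$ in a later fibre of $f$); then no $(s',F') \leq (s,F)$ satisfies $t \sqsubseteq f[s'\cap A]$. This is not a removable technicality: Proposition~\ref{prop:Mat-like_is_Mat} exists precisely because a Mathias like real such as $f[y\cap A]$ need not generate a generic filter on its own --- an auxiliary Cohen real is needed to arrange the stems --- so one should not expect $\mathbb M(\mathcal J^*)$ to project directly onto $\mathbb M(\mathcal I^*)$ by reading off $f[y \cap A]$; the Cohen real must be located inside $\mathbb M(\mathcal J^*)$, which is what the product decomposition accomplishes.
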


\begin{proof}
	Find $A$ as in Lemma~\ref{Farah-claim} and consider the decomposition
	\[\mathbb M(\mathcal J^*) = \mathbb M(\mathcal J^*\restriction A) \times \mathbb M(\mathcal J^*\restriction (\omega \setminus A)).\]
	The forcing $\mathbb M(\mathcal J^*\restriction A)$ adds a Mathias real for $\mathcal \mathcal J^*\restriction A$.
	Lemma~\ref{RK-Ml_tranfer} implies that it also adds a Mathias like real for $\mathcal I^*$.
	Since $\mathcal J^*\restriction (\omega \setminus A)$ is not an ultrafilter,
	the forcing $\mathbb M(\mathcal J^*\restriction (\omega \setminus A))$ adds a Cohen real.
	The conclusion now follows from Corollary~\ref{Mathias-embedds}.
\end{proof}

This shows that the original plan of creating many essentially different forcings
by using different summable ideals is likely to fail.
However, we still do not know whether the Mathias forcing is the same for every tall summable ideal.

\begin{question}
	Are $\mathbb M(\mathcal J^*)$ and $\mathbb M(\mathcal I^*)$ equivalent forcing
	notions if $\mathcal I$ and $\mathcal J$ are tall summable ideals?
\end{question}

To conclude this section let us mention a related result of Farah~\cite[Proposition 3.7.1]{farah-quotients}.

\begin{proposition}
	Assume $\mathsf{OCA + MA}$. If $\mathcal I$ is a summable ideal, then
	$\pw{\omega}/\mathcal I$ is weakly homogeneous iff $\mathcal I = \fin$.
\end{proposition}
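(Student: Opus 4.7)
I would first dispose of the easy direction. If $\mathcal I = \fin$, then for any two $\mathcal I$-positive elements $[A], [B]$ of $\pw{\omega}/\fin$ (i.e.\ $A, B$ infinite subsets of $\omega$), any bijection $A \to B$ induces an isomorphism $\pw{\omega}/\fin \restriction [A] \cong \pw{A}/\fin \cong \pw{B}/\fin \cong \pw{\omega}/\fin \restriction [B]$, so weak homogeneity holds without any extra set-theoretic assumptions.

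For the nontrivial direction, assume $\mathcal I$ is summable with weight sequence $\mu\colon \omega \to [0,\infty)$ and $\mathcal I \neq \fin$. The hypothesis $\mathcal I \neq \fin$ forces $\liminf_n \mu(n) = 0$, so there is an infinite $A_0 \subset \omega$ with $\sum_{n \in A_0}\mu(n) < \infty$, witnessing $\fin \subsetneq \mathcal I$. The plan is to exhibit two $\mathcal I$-positive sets $B_0, B_1$ for which the relative algebras $\pw{B_0}/(\mathcal I\restriction B_0)$ and $\pw{B_1}/(\mathcal I\restriction B_1)$ are not Boolean-isomorphic; this directly contradicts weak homogeneity.

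The main tool is Farah's rigidity theorem under $\mathsf{OCA+MA}$: every isomorphism $\Phi\colon \pw{B_0}/(\mathcal I\restriction B_0) \to \pw{B_1}/(\mathcal I\restriction B_1)$ between quotients by analytic (in particular summable) ideals is \emph{trivial}, meaning it is represented by a bijection $f\colon B_0 \setminus J \to B_1 \setminus K$ with $J \in \mathcal I\restriction B_0$ and $K \in \mathcal I\restriction B_1$, satisfying $X \in \mathcal I$ iff $f[X] \in \mathcal I$ for every $X \subseteq B_0 \setminus J$. In the summable setting this translates to $\sum_{n \in X}\mu(n) < \infty$ iff $\sum_{n \in f[X]}\mu(n) < \infty$. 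Invoking this trivialization theorem is the deep step and the principal obstacle: everything else reduces to a combinatorial comparison of weight profiles once the isomorphism is known to come from an honest almost-bijection.

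Finally, I would choose $B_0, B_1$ ruling out any such trivial $f$. In the case $\limsup_n \mu(n) > 0$, fix $\varepsilon > 0$ and an infinite $B_0$ with $\mu(n) \geq \varepsilon$ for $n \in B_0$; then $\mathcal I\restriction B_0 = \fin\restriction B_0$, while with $B_1 = \omega$ the element $[A_0]$ is a non-zero class represented by an infinite set lying in $\mathcal I$, which has no counterpart in $\pw{B_0}/\fin$. Hence no trivial $f$ can exist. In the tall case $\mu(n) \to 0$, I would thin $\omega$ along the level sets $\os n \mid 2^{-(k+1)} \leq \mu(n) < 2^{-k} \cs$ to obtain $B_0 \in \mathcal I^+$ whose restricted weight $\mu\restriction B_0$ decays strictly faster than $\mu$ itself, then exhibit a single $X \subset B_0$ with $\sum_{n \in X}\mu(n) < \infty$ whose image under any candidate bijection must, by a counting argument comparing the two profiles, satisfy $\sum_{n \in f[X]}\mu(n) = \infty$. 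Either way we contradict trivialization, hence weak homogeneity.
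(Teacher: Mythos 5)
The paper does not actually prove this statement: it is quoted as ``a related result of Farah'' with a citation to Proposition~3.7.1 of his memoir on analytic quotients, so there is no in-paper argument to compare yours against. Your overall strategy---settle the easy direction directly, then use the $\mathsf{OCA+MA}$ trivialization theorem to reduce non-(weak-)homogeneity to a combinatorial statement about almost-bijections---is indeed the intended route, and the $\mathcal I = \fin$ direction is fine. One caution on the black box itself: rigidity under $\mathsf{OCA+MA}$ is not known for arbitrary analytic ideals; the version you need (and the one Farah proves) is for summable, or more generally nonpathological analytic P-, ideals.

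The substantive gaps are all in the second half. (i)~Exhibiting two positive sets $B_0, B_1$ with non-isomorphic relative algebras refutes homogeneity, not weak homogeneity: an automorphism witnessing weak homogeneity for the pair $[B_0],[B_1]$ only produces positive $B_0' \subseteq B_0$ and $B_1' \subseteq B_1$ with isomorphic relative algebras, so you must rule out trivial isomorphisms between \emph{all} such pairs of positive subsets, not just the single pair you name. (ii)~In the case $\limsup_n \mu(n) > 0$ your key sentence is self-contradictory: if $A_0 \in \mathcal I$ then $[A_0]$ is the \emph{zero} class of $\pw{\omega}/\mathcal I$. Worse, this case can genuinely fail as you set it up: for $\mu(n)=1$ on the evens and $\mu(n)=2^{-n}$ on the odds one gets a summable ideal $\mathcal I \neq \fin$ with $\pw{\omega}/\mathcal I \cong \pw{\omega}/\fin$, which is homogeneous; so either the statement must be read up to this trivial modification of $\fin$, or the case split needs the set of small-$\mu$ indices to be $\mathcal I$-positive before any contradiction is available. (iii)~In the tall case the quantifiers of your final step are backwards: you cannot fix one $X \subset B_0$ with $\sum_{n \in X}\mu(n) < \infty$ and expect \emph{every} bijection to send it to a non-summable set---tallness of $\mathcal I\restriction B_1$ guarantees some bijection maps $X$ onto a summable set. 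What is required is that for every candidate almost-bijection $f$ there exists an $X$ (depending on $f$) whose summability is not preserved, and establishing this for a well-chosen pair (in fact, for all pairs of positive subsets) is exactly where the combinatorial content of Farah's proof lies; your sketch does not supply it.
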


\section{Mathias forcing with coideals}\label{sec:Mathias-coid}


This section deals with the forcing $\mathbb M(\mathcal F^+)$
for $\mathcal F$ a filter on $\omega$.
We are mainly interested in the following question.

\begin{question}
When does $\mathbb{M}\left(\mathcal{F}^{+}\right)$ add dominating reals?
\end{question}

The following fact is well known.

\begin{fact}
Let $\mathcal I$ be an ideal on $\omega$. Then
$\mathbb{M}\left(  \mathcal{I}^{+}\right)  =\pw{\omega} /\mathcal{I}\iter\mathbb{M}( \dot{\mathcal{G}^\mathcal{I}_\gen})$.
\end{fact}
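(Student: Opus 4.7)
The plan is to exhibit a dense embedding
\[
\Phi \colon \mathbb{M}(\mathcal I^+) \to \pw{\omega}/\mathcal I \iter \mathbb{M}(\dot{\mathcal G}_\gen^\mathcal I)
\]
defined by $\Phi(s,A) = ([A]_\mathcal I, (\check s, \check A))$, where $[A]_\mathcal I$ denotes the class of $A$ in the quotient. This is the only natural candidate: below $[A]_\mathcal I$ the name $\check A$ is forced to lie in $\dot{\mathcal G}_\gen^\mathcal I$, so $(\check s, \check A)$ is a legitimate name for a condition in $\mathbb{M}(\dot{\mathcal G}_\gen^\mathcal I)$ at that extent.

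Order preservation is an immediate unpacking: if $(s,A) \leq (t,B)$, then $A \subseteq B$ gives $[A]_\mathcal I \leq [B]_\mathcal I$, and below $[A]_\mathcal I$ both $\check A$ and $\check B$ are in the generic ultrafilter, so the Mathias relations $t \sqsubseteq s$, $\check A \subseteq \check B$, and $s \setminus t \subset \check B$ are verified. For incompatibility preservation I would compare the two compatibility criteria head on. Taking WLOG $s \sqsubseteq t$, the conditions $(s,A)$ and $(t,B)$ are compatible in $\mathbb{M}(\mathcal I^+)$ iff $t \setminus s \subset A$ and $A \cap B \in \mathcal I^+$. On the iteration side, $\Phi(s,A)$ and $\Phi(t,B)$ admit a common extension iff $[A \cap B]_\mathcal I \neq 0$ in $\pw{\omega}/\mathcal I$ (that is, $A \cap B \in \mathcal I^+$) and, below $[A \cap B]_\mathcal I$, the conditions $(\check s, \check A)$ and $(\check t, \check B)$ have a refinement in $\mathbb{M}(\dot{\mathcal G}_\gen^\mathcal I)$. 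Since below $[A \cap B]_\mathcal I$ both $\check A$ and $\check B$ belong to $\dot{\mathcal G}_\gen^\mathcal I$, this second requirement reduces to the stem-compatibility condition $t \setminus s \subset A$, matching the ground-level criterion.

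For density, I would start from an arbitrary $(b, \dot q) \in \pw{\omega}/\mathcal I \iter \mathbb{M}(\dot{\mathcal G}_\gen^\mathcal I)$ and pass to some $b' \leq b$ in the quotient that decides $\dot q$ to equal $(\check s, \check A)$ for specific $s \in \fin$ and $A \in \mathcal I^+$. Since $b' \Vdash \check A \in \dot{\mathcal G}_\gen^\mathcal I$, we have $b' \setminus A \in \mathcal I$. Setting $A' = b' \cap A$, this yields $A' \in \mathcal I^+$ and $[A']_\mathcal I = b'$, and one checks directly that $\Phi(s, A') = (b', (\check s, \check{A'})) \leq (b, \dot q)$ in the iteration.

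The main obstacle is purely bookkeeping in the density step: one has to remember to refine the decided set $A$ to $b' \cap A$ so that its quotient class coincides with $b'$; this small move is what aligns the image of $\Phi$ with the given condition $(b, \dot q)$. Everything else is a direct translation between the definition of the Mathias order and that of the two-step iteration.
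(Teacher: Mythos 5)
Your proof is correct. The paper states this as a well-known fact and offers no proof of its own, so there is nothing to compare against; your argument --- the dense embedding $(s,A)\mapsto([A]_{\mathcal I},(\check s,\check A))$, with the compatibility criteria matched on both sides and the density step handled by deciding the name $\dot q$ (legitimate since $\dot{\mathcal G}^{\mathcal I}_{\gen}$ is a $V$-ultrafilter, so $\dot q$ names a ground-model pair) and replacing $A$ by $b'\cap A$ --- is exactly the standard verification one would supply.
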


\begin{proposition}\label{adds-dom}
If $\mathcal{I}$ is a Borel ideal and $\pw{\omega} /\mathcal{I}$ does
not add reals, then $\mathbb{M}\left( \mathcal{I}^{+}\right)$ adds a
dominating real.
\end{proposition}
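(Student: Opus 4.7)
The plan is to argue directly in $\mathbb{M}(\mathcal{I}^+)$, showing that for every ground-model $f \in V \cap \omega^\omega$ the set of conditions forcing the increasing enumeration of the generic real $\dot r$ to strictly dominate $f$ beyond some point is dense. The only hypothesis I shall use is the $\omega$-distributivity of $\pw{\omega}/\mathcal{I}$, which is the precise content of the assumption that no reals are added; Borelness of $\mathcal{I}$ will play no further role, so the factorization fact stated before the proposition is invoked only implicitly.

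Fix $f \in V \cap \omega^\omega$ strictly increasing and a condition $(s, A) \in \mathbb{M}(\mathcal{I}^+)$. For each $k \in \omega$ I would define
\[
D_k = \os B \in \mathcal{I}^+ \mid B \subseteq A,\ \card{B \cap (f(\card{s}+k)+1)} \leq k \cs.
\]
Each $D_k$ is dense below $A$ in $\pw{\omega}/\mathcal{I}$: given any $C \leq A$ in $\mathcal{I}^+$, the truncation $(C \cap A) \setminus (f(\card{s}+k)+1)$ differs from $C$ by a set in $\mathcal{I}$ and witnesses membership in $D_k$, using only $\mathcal{I} \supseteq \fin$. By $\omega$-distributivity of the quotient, $\bigcap_k D_k$ is still dense below $A$, so I can pick $A' \in \bigcap_k D_k$ and, replacing $A'$ by $A' \cap A$ if necessary, assume $A' \subseteq A$ as actual sets.

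By construction, $A'$ has at most $k$ elements inside $[0,f(\card{s}+k)]$ for every $k$. Hence for any Mathias-generic real $r$ extending $(s, A')$ and any $n \geq \card{s}$,
\[
\card{r \cap (f(n)+1)} \leq \card{s} + \card{A' \cap (f(n)+1)} \leq \card{s} + (n-\card{s}) = n,
\]
so the $n$-th element of $r$ in its $0$-indexed increasing enumeration strictly exceeds $f(n)$; thus the enumeration of the generic real is a dominating real. The only mildly delicate step I anticipate is verifying genuine density of each $D_k$ in the quotient forcing (rather than merely nonemptiness), which reduces to the elementary observation that finite modifications preserve $\mathcal{I}$-positivity; everything else is a direct density/counting argument.
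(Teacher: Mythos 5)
There is a genuine gap, and it sits exactly at the step you flag as ``mildly delicate'' --- but the problem is not the density of each $D_k$ (that part is fine, and each $D_k$ is even open as a subset of the quotient, since $C\cap B$ is a representative of $[C]$ lying in $D_k$ whenever $[C]\leq[B]$ and $B\in D_k$). The problem is the passage from ``$\bigcap_k D_k$ is dense'' to ``there is a single set $A'$ satisfying the counting condition for every $k$.'' Distributivity is a statement about the quotient algebra: it gives you a class $[C]\leq[A]$ which, for each $k$, admits \emph{some} representative $B_k\in D_k$ with $C\mathbin{\triangle}B_k\in\mathcal I$. But the sets $C\setminus B_k$ are merely $\mathcal I$-small individually; their union over $k$ need not be in $\mathcal I$, so there need not exist one representative of $[C]$ witnessing all counting conditions simultaneously, and no actual set $A'\in\mathcal I^+$ with $\card{A'\cap(f(\card{s}+k)+1)}\leq k$ for all $k$ need exist below $A$. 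That this cannot be repaired from $\omega$-distributivity alone is shown by taking $\mathcal I=\mathcal U^*$ for an ultrafilter $\mathcal U$: the quotient $\pw{\omega}/\mathcal U^*$ is atomic, hence trivially adds no reals, each $D_k$ is dense, yet an $A'\in\mathcal U$ as above exists for every $f$ exactly when $\mathcal U$ is rapid --- and non-rapid ultrafilters exist (indeed it is consistent that there are none that are rapid). Worse, for a Canjar ultrafilter $\mathcal U$ the forcing $\mathbb M(\mathcal U)=\mathbb M(\mathcal I^+)$ adds no dominating real at all, so your announced principle ``only $\omega$-distributivity is used, Borelness plays no role'' proves a false statement. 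The Borel hypothesis is therefore essential, not decorative.

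This is precisely why the paper's proof takes a different route. For F$_{\sigmaup}$ ideals it does not try to make the increasing enumeration of $\dot r$ dominating; it fixes a Mazur submeasure $\varphi$ with $\mathcal I=\operatorname{fin}(\varphi)$ and reads off the dominating real as $g(n)=\min\os k\mid 2^n\leq\varphi(\dot r\cap k)\cs$. The density argument then only needs to choose, beyond each $f(i)$, a \emph{finite} block $t_i\subseteq A$ with $2^i\leq\varphi(t_i)<2^{i+1}$ --- possible because $\varphi(A)=\infty$ and singletons have mass $1$ --- and the union $B=\bigcup t_i$ stays $\varphi$-unbounded, hence $\mathcal I$-positive, with no need to thin $A$ out. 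The general Borel case is then handled by a dichotomy on whether $\mathcal G^{\mathcal I}_{\gen}$ is a P-point: if not, the generic ultrafilter is not Canjar and the second factor of $\pw{\omega}/\mathcal I\iter\mathbb M(\dot{\mathcal G}_{\gen})$ adds a dominating real; if so, the Hru\v s\'ak--Verner theorem makes $\mathcal I$ locally F$_{\sigmaup}$ and the submeasure argument applies. If you want to salvage your approach, you would at minimum have to prove that below every positive set there is a positive $A'$ with $\card{A'\cap g(k)}\leq k$ for all $k$; that is a nontrivial ``rapidity'' property of the coideal and is where the real content of the proposition lies.
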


\begin{proof}
First assume that $\mathcal{I}$ is an F$_{\sigmaup}$ ideal.
Let $\varphi$ be a submeasure as in Proposition~\ref{prop:mazur}.

Let $r$ be a $\mathbb{M}\left(\mathcal{I}^{+}\right)$ generic real and
notice that $r \notin \operatorname{fin}(\varphi)$.
In $V[r]$ define an increasing function $g\colon\omega
\to\omega$ by letting \[g(n) = \min\os k \in \omega \mid 2^n \leq\varphi (r \cap k) \cs. \]
We will show $g$ is a dominating real.
Let $\left( s,A\right)  \in\mathbb{M}\left(  \mathcal{I}^{+}\right)$ be a condition and
$f\colon\omega\to\omega$ a function in $V$.
We will extend $\left(s,A\right)$ to a condition that forces that $g$ dominates $f$.
Pick $m\in \omega$ such that $\varphi(s) < 2^m$ and for every $i > m$
choose $t_{i}\subseteq A \setminus f(i)$ such that
$\max\left(t_{i}\right) < \min\left(t_{i+1}\right)$
and $2^i \leq \varphi(t_i) < 2^{i+1}$.
This is possible since $\varphi(A) = \infty$ and the $\varphi$-mass of singletons is $1$.
Put $B=\bigcup_{m < i} t_{i}$, thus $\varphi(B) = \infty$
and $(s,B) \in \mathbb{M}\left(\mathcal{I}^{+}\right)$.
Moreover $\left(s, B\right) \leq\left( s,A\right)$,
and since $\left(s, B\right) \Vdash \dot r \subset s \cup B$
we have that $\left(s, B\right) \Vdash f(i) < g(i)$ for $i >m$.

For the general case let $\mathcal{I}$ be an analytic  ideal such
that $\pw{\omega}/\mathcal{I}$ does not add reals.
If $\mathcal{G}^{\mathcal{I}}_{\gen}$ is not a P-point, then it is not a Canjar filter (see e.g.\,\cite{canjar}),
and $\mathbb{M}\left(\mathcal{I}^{+}\right) = \pw{\omega} /\mathcal{I}\iter\mathbb{M}\left(\mathcal{G}^{\mathcal{I}}_{\gen}\right)$
will add a dominating real.
In case $\mathcal{G}^\mathcal{I}_{\gen}$ is a P-point, then by
\cite[Theorem 2.5]{hrusak-verner}  $\mathcal{I}$ is locally F$_{\sigmaup}$
and $\mathbb{M}\left( \mathcal{I}^{+}\right)$
adds a dominating real as demonstrated in the first part of the proof.
\end{proof}

\begin{question}
	Is there a Borel ideal $\mathcal I$ such that $\mathbb{M}\left( \mathcal{I}^{+}\right)$ 
	does not add a~dominating real? 
\end{question}

It is easy to see that in every generic extension by $\mathbb{M}(\mathcal F^+)$
the ground model set of reals is meager, and thus $\mathbb{M}(\mathcal F^+)$ always adds
an eventually different real.
In~\cite{hrusak-verner} Michael Hru\v{s}\'{a}k and
Jonathan Verner asked the following question.

\begin{question}
Is there a Borel ideal $\mathcal{I}$ on $\omega$ such that
$\pw{\omega}/\mathcal{I}$ adds a Canjar ultrafilter?
\end{question}

We answer this question in negative.

\begin{lemma}
If $\mathcal{I}$ is an ideal on $\omega$ such that $\mathcal{G}^\mathcal{I}_{\gen}$ is a P-point, then
$\pw{\omega}  /\mathcal{I}$ does not add reals.
\end{lemma}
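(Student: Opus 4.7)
The plan is a contradiction argument: suppose some $C_0 \in \pw{\omega}/\mathcal I$ forces a name $\dot f$ to be a function $\omega \to \omega$ not in $V$, and use the P-point property of $\mathcal G^{\mathcal I}_\gen$ in $V[G]$ to produce a single ground-model condition $B \leq C_0$ that decides every value of $\dot f$, contradicting $C_0 \Vdash \dot f \notin V$.

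In $V$ the first step is to choose, for each $n \in \omega$, a maximal antichain $\mathcal A_n$ of conditions below $C_0$ deciding $\dot f(n)$, writing $\ell^{(n)}_A$ for the value forced by $A \in \mathcal A_n$. In $V[G]$ let $B_n$ be the unique element of $\mathcal A_n \cap \mathcal G^{\mathcal I}_\gen$. Applied to the countable family $\os B_n \mid n \in \omega \cs \subseteq \mathcal G^{\mathcal I}_\gen$, the P-point hypothesis yields $B \in \mathcal G^{\mathcal I}_\gen$ with $B \subset^* B_n$ for every $n$; since the $V$-generic ultrafilter consists of ground-model sets, $B \in V$.

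The heart of the argument is pulling the sequence $(B_n)_{n \in \omega}$ back into $V$. For $A \in \mathcal A_n$ with $A \neq B_n$ the antichain property gives $A \cap B_n \in \mathcal I$, while $B \setminus B_n$ is finite, so $A \cap B \in \mathcal I$; hence $A$ and $B$ are incompatible in $\pw{\omega}/\mathcal I$, and $B$ is compatible with a unique element of $\mathcal A_n$. Both compatibility and the order $\leq$ on $\pw{\omega}/\mathcal I$ are absolute between $V$ and $V[G]$, so in $V$ one may define $\tilde B_n$ as the unique element of $\mathcal A_n$ compatible with $B$; then $\tilde B_n = B_n$ in $V[G]$, and $B \leq \tilde B_n$ holds in $V$. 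The function $g(n) = \ell^{(n)}_{\tilde B_n}$ belongs to $V$ and satisfies $B \Vdash \dot f(n) = g(n)$ for every $n$, so $B \Vdash \dot f = g \in V$, contradicting $B \leq C_0$.

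The main obstacle is precisely this identification step: a priori the sequence $(B_n)$ is a $V[G]$-object, and the role of the pseudo-intersection $B$ is not only to witness the P-point condition but, via the incompatibility argument above, to single out each $B_n$ by a $V$-absolute criterion. Without this, the P-point property alone would yield only a pseudo-intersection in $V$, which tells one nothing about $\dot f$.
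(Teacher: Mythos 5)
Your proof is correct and follows essentially the same route as the paper's: find conditions in the generic filter deciding each value of the name, apply the P-point property to get a pseudo-intersection $B$, observe that $B$ is a ground-model set, and conclude that $B$ forces the name to be a ground-model function. The only difference is your careful antichain/absoluteness step for recovering the $B_n$ in $V$, which is valid but not strictly needed: since $B\in V$ and the forcing relation is definable in $V$, the fact that $B$ decides each $\dot f(n)$ (which follows from $B\leq B_n$) already makes $g(n)=$ ``the value $B$ forces for $\dot f(n)$'' a $V$-definable function.
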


\begin{proof}
Let $A\in\mathcal{I}^{+}$ and $r$ a name such that $A\Vdash \dot{r}\in
\omega^{\omega}$. Let $\mathcal{G}_{\gen}$ be a $\pw{\omega}/\mathcal{I}$
generic filter such that $A\in\mathcal{G}_{\gen}$
and for every $n\in\omega$ we can find $A_{n}\in\mathcal{G}_{\gen}$ such that
$A_{n}\leq A$ and $A_{n}$ decides $\dot{r}\left(n\right)$. Since
$\mathcal{G}_{\gen}$ is a $P$-point, there is $B\in\mathcal{G}_{\gen}$ such that
$B\subseteq^{\ast}A_{n}$ for every $n\in\omega$ (note that we can assume $B$
is a ground model set since $\mathcal{G}_{\gen}$ is generated by ground model
sets). Clearly $B\leq A$ and forces $\dot{r}$ to be a ground model real.
\end{proof}

\begin{corollary}
If $\mathcal{I}$ is an analytic ideal
then $\mathcal{G}^\mathcal{I}_{\gen}$ is not a Canjar ultrafilter.
\end{corollary}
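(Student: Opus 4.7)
The plan is a straightforward dichotomy. I would split into two cases according to whether $\mathcal{G}^\mathcal{I}_\gen$ is forced to be a P-point.

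If $\mathcal{G}^\mathcal{I}_\gen$ is not a P-point, then since every Canjar ultrafilter is a P-point (a fact recalled in the proof of Proposition~\ref{adds-dom}), the conclusion is immediate. If instead $\mathcal{G}^\mathcal{I}_\gen$ is a P-point, then the Lemma immediately preceding the corollary tells us that $\pw{\omega}/\mathcal I$ adds no reals. I would then invoke Proposition~\ref{adds-dom} — whose proof, as written, actually handles analytic $\mathcal I$ by reducing the P-point subcase to the locally F$_{\sigmaup}$ case via the Hru\v{s}\'ak--Verner theorem — to obtain that $\mathbb M(\mathcal I^+)$ adds a dominating real. Using the factorization $\mathbb M(\mathcal I^+) = \pw{\omega}/\mathcal I \iter \mathbb M(\dot{\mathcal{G}^\mathcal{I}_\gen})$ together with the fact that the first factor introduces no reals in this case, the dominating real must be introduced by the second factor $\mathbb M(\mathcal{G}^\mathcal{I}_\gen)$ over the intermediate model. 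Hence $\mathbb M(\mathcal{G}^\mathcal{I}_\gen)$ is not weakly $\omega^\omega$-bounding, i.e.\ $\mathcal{G}^\mathcal{I}_\gen$ is not Canjar.

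There is no real obstacle — the argument is essentially a bookkeeping step packaging the Lemma with the dichotomy already present inside the proof of Proposition~\ref{adds-dom}. The only point worth a moment's attention is that the Borel hypothesis appearing in the statement of Proposition~\ref{adds-dom} is stronger than what its proof actually uses; inspection shows that the argument goes through verbatim for analytic $\mathcal I$, which is precisely what we need in the P-point branch of the dichotomy.
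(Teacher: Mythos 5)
Your argument is correct and is essentially the paper's own proof: the paper runs the same dichotomy (phrased as whether $\pw{\omega}/\mathcal I$ adds reals rather than whether $\mathcal{G}^{\mathcal I}_{\gen}$ is a P-point, but the preceding Lemma makes these interchangeable), combining ``Canjar implies P-point,'' the Lemma, Proposition~\ref{adds-dom}, and the factorization $\mathbb M(\mathcal I^+) = \pw{\omega}/\mathcal I \iter \mathbb M(\dot{\mathcal{G}}_{\gen})$. Your remarks that the proof of Proposition~\ref{adds-dom} already covers analytic $\mathcal I$ and that the dominating real must come from the second factor since the first adds no reals are exactly the (tacit) points the paper relies on.
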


\begin{proof}
By the previous proposition if $\pw{\omega}/\mathcal{I}$
adds new reals then the generic filter is not a Canjar
ultrafilter.
Assume no new reals are added. By Proposition~\ref{adds-dom},
$\mathbb{M}\left(\mathcal{I}^{+}\right)$ adds a dominating real and
$\mathcal{G}_{\gen}$ is not Canjar.
\end{proof}

\section{Mathias--Prikry forcing and eventually different reals}~\label{sec:Mathis-EDreals}

We turn our attention towards the forcing $\mathbb M(\mathcal F)$
for a filter $\mathcal F$.
Our goal is the characterization of filters for which this forcing does not add eventually different reals.

A filter $\mathcal F$ is \emph{$+$-Ramsey}~\cite{laflamme} if for each $\mathcal F^+$-tree $T$ there is a branch
$b \in [T]$ such that $b[\omega] \in \mathcal F^+$.

\begin{definition}
Let $\mathcal F$ be a filter on $\omega$.
We say that $\mathcal{F}$ is \emph{$+$-selective}
if for every sequence $\os X_{n}\mid n\in\omega \cs  \subseteq\mathcal{F}^{+}$ there is a selector
\[S=\os  a_{n} \in X_n \mid n\in\omega\cs  \in\mathcal{F}^{+}.\]
\end{definition}

Every $+$-Ramsey filter is $+$-selective and every $+$-selective filter is a P$^+$-filter.

Let $M$ be an extension of the universe of sets $V$.
We say that $r \in \omega^\omega \cap M$ is an
eventually different real over $V$
if the set $\os n \in \omega \mid r(n) = f(n) \cs$ is finite
for each $f \in \omega^\omega \cap V$.
We say that a forcing $\mathbb P$ does not add
an eventually different real iff there is no eventually
different real over $V$ in any generic extension by forcing $\mathbb P$.

\begin{theorem}\label{thm:ED-Mathias}
Let $\mathcal F$ be a filter. The following are equivalent;
\begin{enumerate}
	\item\label{item:Mat-no-ed}
		Forcing $\mathbb{M}\left(  \mathcal{F}\right)$ does not add an eventually different real,
	\item\label{item:+selective}
		$\mathcal{F}^{<\omega}$ is $+$-selective,
	\item\label{item:+Ramsey}
		$\mathcal{F}^{<\omega}$ is $+$-Ramsey.
\end{enumerate}
\end{theorem}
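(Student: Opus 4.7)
My plan is to prove the theorem via the cycle $(3) \Rightarrow (2) \Rightarrow (1) \Rightarrow (3)$. The first implication is immediate from the observation preceding the theorem that every $+$-Ramsey filter is $+$-selective.

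For $(1) \Rightarrow (2)$, argued by contrapositive: suppose $\os X_n \mid n \in \omega \cs \subseteq (\mathcal{F}^{<\omega})^+$ admits no positive selector. First refine by replacing each $X_n$ with $\os a \in X_n \mid \min a \geq n \cs$, which remains $(\mathcal{F}^{<\omega})^+$-positive and still has no positive selector. Fix enumerations $X_n = \os x_n^k \mid k \in \omega \cs$ and let $\dot e(n) := \min \os k \mid x_n^k \subseteq \dot r \cs$, where $\dot r$ is the Mathias generic; this name is well-defined by condition~(\ref{Mathias-like-big}) of Definition~\ref{Mathias-like-defin}. For any ground-model $f$, the selector $\os x_n^{f(n)} \mid n \in \omega \cs$ is not positive, so some $F \in \mathcal F$ satisfies $x_n^{f(n)} \not\subseteq F$ for every $n$. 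Since $\dot r \subseteq^* F$, any agreement $\dot e(n) = f(n)$ forces $x_n^{f(n)}$ to meet the finite set $\dot r \setminus F$; combined with $\min x_n^{f(n)} \geq n$, the set of such $n$ is bounded by $\max(\dot r \setminus F)$. Hence $\dot e$ is eventually different over $V$, contradicting $(1)$.

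For $(2) \Rightarrow (1)$, assume $\mathcal{F}^{<\omega}$ is $+$-selective. Given a name $\dot e$ for a real and a condition $(s_0, A_0)$, for each $n$ set
\[
X_n = \os a \in \fin \mid a \subseteq A_0,\ s_0 < a,\ \exists B \in \mathcal F \ \exists k \in \omega\ (s_0 \cup a, B) \Vdash \dot e(n) = k \cs,
\]
which lies in $(\mathcal{F}^{<\omega})^+$ by a standard density argument. The aim is to extract $f \in V$ and $A^* \in \mathcal F$ with $A^* \subseteq A_0$ such that $(s_0, A^*) \Vdash \dot e =^\infty f$. The principal obstacle is stem-compatibility: a deciding condition $(s_0 \cup a, B)$ need not extend an arbitrary refinement $(s', B') \leq (s_0, A_0)$ with $s_0 \subsetneq s'$, since $s_0 \cup a \not\sqsubseteq s' \cup a$ in general. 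I handle this by simultaneously varying the stem: for each finite $s$ with $s_0 \sqsubseteq s$ and $s \setminus s_0 \in [A_0]^{<\omega}$ and each $n$, define $X_n^s$ analogously, then apply $+$-selectivity to the countable family $\os X_n^s \mid s, n \cs$ to obtain a single selector $\os a_{s, n} \cs$ with $(\mathcal{F}^{<\omega})^+$-positive range. A $P^+$-style diagonalization (using that $+$-selectivity of $\mathcal{F}^{<\omega}$ entails $P^+$-ness) merges the witness sets into a common $A^* \in \mathcal F$, and a further pigeonhole/selectivity step on decided values secures a stem-independent value $f(n)$. The density of the set ``$\exists n \geq N\ (s',B') \Vdash \dot e(n) = f(n)$'' below $(s_0, A^*)$ then follows from the positivity of the selector applied at the relevant stem $s'$.

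The main technical obstacle, where I expect the bulk of the work to go, is the stem-uniformization of the decided values: the single ground-model $f$ must be consistent with decisions made at every possible stem extension of $s_0$, and it is precisely here that $+$-selectivity of $\mathcal{F}^{<\omega}$ (beyond the weaker $P^+$-property) is indispensable, since the application of $+$-selectivity to the stem-indexed family is what lets a single selector accommodate all stems simultaneously.
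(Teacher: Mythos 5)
Your proposal does not actually prove the theorem: you announce the cycle $(3)\Rightarrow(2)\Rightarrow(1)\Rightarrow(3)$, but what you then supply is $(3)\Rightarrow(2)$, $(1)\Rightarrow(2)$, and a sketch of $(2)\Rightarrow(1)$. No implication with conclusion $(3)$ is ever proved, so the $+$-Ramsey property is never derived and the cycle is not closed. This cannot be repaired by appealing to a general implication from $+$-selective to $+$-Ramsey: the paper points out that this implication fails for arbitrary filters (there is an F$_\sigma$ $+$-selective, non-$+$-Ramsey filter on $2^{<\omega}$), so one needs an argument specific to the situation. The paper closes the cycle via $(1)\Rightarrow(3)$: given an ${\mathcal{F}^{<\omega}}^+$-tree $T$ and the Mathias generic $r$, the sets $O_n=\{a\in[T]\mid \exists m>n: a(m)\subset r\setminus n\}$ are open dense in $[T]$; since no eventually different real is added, Proposition~\ref{prop:meager_iff_evdiff} gives a ground-model branch $b\in\bigcap_n O_n$, and $b[\omega]\in{\mathcal{F}^{<\omega}}^+$ because $r\subset^* F$ for every $F\in\mathcal F$. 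This idea is entirely absent from your write-up. (On the positive side, your contrapositive argument for $(1)\Rightarrow(2)$ is correct and rather clean --- the refinement $\min a\ge n$ and the name $\dot e(n)=\min\{k\mid x_n^k\subseteq\dot r\}$ do the job --- but it becomes redundant once the cycle is closed, since the paper obtains it as $(1)\Rightarrow(3)\Rightarrow(2)$.)

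There is also a genuine gap in your sketch of $(2)\Rightarrow(1)$, precisely at the step you flag as the technical heart. Your proposed ``pigeonhole/selectivity step on decided values'' to secure a stem-independent value $f(n)$ cannot work: for a fixed $n$ the values decided at different stems $s$ may be pairwise distinct, so no single $f(n)$ agrees with all of them; and you cannot simply attach one stem to each $n$, because the density argument needs \emph{every} stem to own infinitely many coordinates at which $f$ records a decision made at that very stem. The paper's resolution is different and is the key trick: it does not uniformize values across stems but partitions the \emph{domain of the name}. Fix an enumeration $\fin=\langle s_i\mid i\in\omega\rangle$ and a partition $\omega=\bigcup_i a_i$ into infinite sets; the set $X_k$ consists of those $t$ for which, \emph{simultaneously for all} $i<k$, some $(s_i\cup t,F)$ decides $\dot x(a_i(k))$, and the ground-model function $g$ is defined on $a_i$ using only decisions made at stem $s_i$. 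A single selector for $\langle X_k\rangle$ then meets every $[G]^{<\omega}$ infinitely often, which gives the density of agreement below an arbitrary condition $(s_i,G)$ with no need for your auxiliary ``merge into a common $A^*\in\mathcal F$'' step. As written, your sketch would not assemble into a proof.
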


\begin{proof}
The implication
 (\ref{item:+Ramsey}) $\Rightarrow$ (\ref{item:+selective}) is clear.
We start with (\ref{item:+selective}) $\Rightarrow$ (\ref{item:Mat-no-ed}).

Let ${\mathcal{F}^{<\omega}}$ be $+$-selective and $x$ be
an $\mathbb{M}(\mathcal{F})$ name for a function in  $\omega^\omega$.
Enumerate $\fin = \la s_i \mid i \in \omega \ra$ such that $\max s_i \leq i$ for each $i \in \omega$.
Let $\os a_i \mid i \in \omega \cs$ be a partition of $\omega$ into infinite sets,
and denote by $a_i(k)$ the $k$-th element of $a_i$.
For $k \in \omega$ let
\begin{multline*}
X_k = \bigl \{ \, t \in \fin \mid k < \min t \text{ and }
\forall i < k \colon \exists h^t_i(k) \in \omega \colon \exists F \in \mathcal F \colon  \\
(s_i \cup t, F) \Vdash \dot x(a_i(k)) = h^t_i(k) \, \bigr \}.
\end{multline*}
\begin{claim*}
$X_k \in {\mathcal F^{<\omega}}^+$ for each $k \in \omega$.
\end{claim*}
Let $k \in \omega$.
We need to show that for each $G \in \mathcal F$ there exists $t \in X_k$ such that $t \subset G$.
Put $t_0 = \emptyset$, $F_0 = G \setminus (k+1)$, and for $i < k$ proceed with an inductive construction as follows.

Suppose $t_i$, $F_i$ were defined, we will define $t_{i+1}$, $F_{i+1}$, $h^t_i(k)$.
Find a condition $p = (s_i \cup t_{i+1},F_{i+1} ) < ( s_i \cup t_i, F_i )$
and $h^t_i(k) \in \omega$
such that $p \Vdash \dot x(a_i(k)) = h^t_i(k)$.
Finally put $t = t_k$, and notice that $t \in X_k$, $t \subset G$.
\claimdone\medskip

Let $S \in {\mathcal F^{<\omega}}^+$ be a selector for $\la X_k \mid k \in \omega \ra$
guaranteed by the $+$se\-le\-cti\-vity of ${\mathcal F^{<\omega}}$.
Define  $g ;\: \omega \to \omega$ by $g(a_i(k)) = h^t_i(k)$ if $t \in S$ and $i < k$.
We claim that $\Vdash \card{ \os  g(n) = \dot x(n) \mid n \in \omega\cs } = \omega$.

Let $(s_i,G)$ be a condition and $n$ be an integer.
There exists $k > n, i$ and $t \in X_k \cap S$ such that $t\subset G$.
Thus there is $F \in \mathcal F$ such that
\[(s_i \cup t, F) \Vdash \dot x(a_i(k)) = h^t_i(k) = g(a_i(k)).\]
Put $p = (s_i \cup t, F \cap G) < (s_i, G)$.
Now $n < k \leq a_i(k)$ and $p \Vdash \dot x(a_i(k)) = g(a_i(k))$.
\smallskip

To prove (\ref{item:Mat-no-ed}) $\Rightarrow$ (\ref{item:+Ramsey})
assume $\mathbb{M}\left(\mathcal{F}\right)$ does not add an eventually
different real.
Let $T$ be an ${\mathcal{F}^{<\omega}}^+$-tree and $r$ be an $\mathbb{M}(\mathcal F)$ generic real.
For $n \in \omega$ let $O_n = \os a \in [T] \mid \exists m > n : a(m) \subset r \setminus n \cs$.
Note that each such $O_n$ is an open dense subset of $[T]$.
Now $G = \bigcap \os O_n \mid n \in \omega \cs$ is a dense G$_\deltaup$ set,
and Proposition~\ref{prop:meager_iff_evdiff} implies that there exists some $b \in G \cap V$.
We claim that $b$ is the desired branch for which $b[\omega] \in {\mathcal{F}^{<\omega}}^+$.
Otherwise there is $F \in \mathcal{F}$ such that $b[\omega] \cap F^{<\omega} = \emptyset$,
which contradicts $r \subset^* F$ and $\card{{[r]}^{<\omega} \cap b[\omega]} = \omega $.
\end{proof}

The last part of the proof in fact demonstrated the following.

\begin{theorem}
	Let $V \subseteq U$ be models of the set theory, $\mathcal F \subset \pw{\omega}$
	be a filter in $V$. If $U$ contains a Mathias like real for $\mathcal F$
	but no eventually different real over $V$, then $\mathcal F$ is $+$-Ramsey.
\end{theorem}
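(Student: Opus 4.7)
The plan is to redo the implication (\ref{item:Mat-no-ed}) $\Rightarrow$ (\ref{item:+Ramsey}) of Theorem~\ref{thm:ED-Mathias}, but with the specific Mathias generic real replaced by the ambient Mathias like real $x \in U$. Concretely, I aim to show that $\mathcal{F}^{<\omega}$ is $+$-Ramsey: given an ${\mathcal{F}^{<\omega}}^+$-tree $T \in V$, I produce a branch $b \in [T] \cap V$ with $b[\omega] \in {\mathcal{F}^{<\omega}}^+$.

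For each $n \in \omega$ I set
\[O_n = \os b \in [T] \mid \exists m > n \colon b(m) \subset x \setminus n \cs,\]
which is open in the Polish space $[T] \subseteq \fin^\omega$. To check density, take any $t \in T$ and extend $t$ within $T$ to some $t'$ with $\card{t'} > n$ (possible since $T$ has no leaves). The successor set $H = \os a \in \fin \mid t' \conc a \in T \cs$ lies in ${\mathcal{F}^{<\omega}}^+ \cap V$, and intersecting it with ${[\omega \setminus n]}^{<\omega} \in \mathcal{F}^{<\omega}$ keeps us in ${\mathcal{F}^{<\omega}}^+$. Condition~(\ref{Mathias-like-big}) of Definition~\ref{Mathias-like-defin} applied to this positive set provides $a \in H$ with $a \subset x$ and $a \cap n = \emptyset$; any branch through $t' \conc a$ then belongs to $O_n$.

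Hence $G = \bigcap_n O_n$ is a dense G$_\deltaup$ subset of $[T]$, so $[T] \setminus G$ is meager. Because $[T]$ is a perfect closed subset of $\fin^\omega$, it is $V$-homeomorphic to Baire space, so the hypothesis combined with Proposition~\ref{prop:meager_iff_evdiff} yields that $V \cap [T]$ is non-meager in $[T]$; in particular $V \cap G \neq \emptyset$. Fix any $b \in V \cap G$. To see $b[\omega] \in {\mathcal{F}^{<\omega}}^+$, suppose towards a contradiction that some $F \in \mathcal{F} \cap V$ witnessed failure, i.e.\ no $a \in b[\omega]$ is contained in $F$. Condition~(\ref{Mathias-like-small}) of Definition~\ref{Mathias-like-defin} supplies $n_0$ with $x \setminus n_0 \subset F$, and $b \in O_{n_0}$ then yields $m > n_0$ with $b(m) \subset x \setminus n_0 \subset F$, contradicting the choice of $F$. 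The one delicate step is the density of $O_n$, which requires both Mathias like conditions to interact cleanly with the branching and no-leaf structure of $T$; the Baire category transfer between $\omega^\omega$ and $[T]$ is standard.
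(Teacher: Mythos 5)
Your proof is correct and follows the paper's own argument essentially verbatim: the paper obtains this theorem from the last part of the proof of Theorem~\ref{thm:ED-Mathias}, using the same sets $O_n$, the same appeal to Proposition~\ref{prop:meager_iff_evdiff}, and the same concluding contradiction via $x \subset^* F$; you merely substitute the Mathias like real for the generic one and spell out the density of $O_n$ and the category transfer, which the paper leaves implicit. One small repair: ``perfect closed'' alone does not yield a homeomorphism with Baire space (consider $2^\omega \subseteq \omega^\omega$); what you actually need, and what holds here, is that every node of an $({\mathcal F^{<\omega}})^+$-tree has infinitely many immediate successors, so $[T]$ is an everywhere infinitely branching pruned tree and hence homeomorphic to $\omega^\omega$ via a map coded in $V$.
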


The implication (\ref{item:+selective}) $\Rightarrow$ (\ref{item:+Ramsey}) of Theorem~\ref{thm:ED-Mathias}
can be proved directly with the same proof as is used in~\cite[Lemma 2]{pawlikowski}.
Although this implication holds true for filters of the form $\mathcal{F}^{<\omega}$,
this is not the case for filters in general.
The filter on $2^{<\omega}$ generated by complements of $\subseteq$-chains and $\subseteq$-antichains is an F$_{\sigmaup}$ $+$-selective filter
which is not $+$-Ramsey.

The following proposition is a direct consequence of~\cite[Theorem~2.9]{laflamme}.

\begin{proposition}
Let $\mathcal F$ be a Borel filter. $\mathcal F$ is $+$-Ramsey if and only
if $\mathcal F$ is countably generated.
\end{proposition}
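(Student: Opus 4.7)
The plan is to prove the two implications separately, with the backward implication being a direct construction and the forward implication reducing to Laflamme's structural theorem.

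For the easy direction, assume $\mathcal F$ is countably generated. Replacing the generators by their finite intersections, I may pick a decreasing base $F_0 \supseteq F_1 \supseteq \cdots$ such that $F \in \mathcal F$ iff $F \supseteq^* F_n$ for some $n$. Given any $\mathcal F^+$-tree $T$ with stem $t$, I build a branch $b \in [T]$ recursively: the stem is already fixed, and assuming $b \restriction m$ has been defined for $m \geq \card{t}$, the set of immediate successors of $b \restriction m$ in $T$ is in $\mathcal F^+$, hence meets $F_m$, so I pick $b(m)$ inside $F_m$. The resulting $b$ satisfies $b(m) \in F_m$ for all sufficiently large $m$. For any $F \in \mathcal F$ there is $n$ with $F_n \subseteq^* F$, and then $\os b(m) \mid m \geq n \cs$ is an infinite subset of $F_n$ almost contained in $F$, so $b[\omega] \cap F$ is infinite. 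Thus $b[\omega] \in \mathcal F^+$ and $\mathcal F$ is $+$-Ramsey.

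For the harder direction, the argument is precisely Laflamme's Theorem~2.9 in~\cite{laflamme}. The point is that if $\mathcal F$ is Borel but not countably generated, one can exploit the Borel presentation of $\mathcal F$ (via a continuous or lower semicontinuous representation of its complement) to construct an $\mathcal F^+$-tree $T$ such that every branch $b \in [T]$ has $b[\omega] \notin \mathcal F^+$. Roughly, at each node one uses the failure of countable generation to arrange that following successors drives the range into a set that witnesses non-positivity against a carefully chosen $F \in \mathcal F$, while Borelness of $\mathcal F$ ensures that this construction can be carried out uniformly. This contradicts the $+$-Ramsey property, establishing that every Borel $+$-Ramsey filter must be countably generated.

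The main obstacle is the forward implication: the construction of a bad $\mathcal F^+$-tree from a filter that is not countably generated requires a uniform selection procedure using the Borel structure of $\mathcal F$, which is exactly the content of Laflamme's theorem. Since that argument is already written out in~\cite{laflamme}, my proof would simply reference Theorem~2.9 there and spell out only the elementary backward direction in detail.
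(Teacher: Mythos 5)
The paper offers no proof of this proposition at all: it simply records it as a direct consequence of Laflamme's Theorem~2.9, which is exactly the citation you lean on for the forward implication, so your approach is essentially the same as the paper's, with the bonus of spelling out the elementary backward direction. That direction is fine except for one small omission: after picking $b(m) \in F_m$ you assert that $\os b(m) \mid m \geq n \cs$ is \emph{infinite}, which does not follow from merely ``meeting'' $F_m$ at each step; since the successor set intersected with $F_m$ is $\mathcal F$-positive and hence infinite, you should choose the $b(m)$ strictly increasing to guarantee that $b[\omega]$ is infinite (and hence positive).
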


\begin{corollary}\label{cor:ED-or-Cohen}
If $\mathcal F$ is a Borel filter on $\omega$ and $\mathbb M(\mathcal F)$
does not add an eventually different real, then $\mathbb M(\mathcal F)$
is forcing equivalent to the Cohen forcing.
\end{corollary}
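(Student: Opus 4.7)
The plan is to combine Theorem~\ref{thm:ED-Mathias} with the preceding Proposition to show that $\mathcal F$ is countably generated, and then verify directly that Mathias forcing with a countably generated filter is forcing equivalent to the Cohen forcing.

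By Theorem~\ref{thm:ED-Mathias}, the hypothesis on $\mathbb M(\mathcal F)$ implies that $\mathcal F^{<\omega}$ is $+$-Ramsey, and since $\mathcal F$ is Borel the filter $\mathcal F^{<\omega}$ is Borel as well. Hence the preceding Proposition applied to $\mathcal F^{<\omega}$ produces a decreasing base $\os H_n \mid n \in \omega \cs$ of $\mathcal F^{<\omega}$. From this base I would extract generators of $\mathcal F$ itself: by definition of $\mathcal F^{<\omega}$, for each $n$ there is some $F_n \in \mathcal F$ with $[F_n]^{<\omega} \subseteq^* H_n$, and I claim that $\os F_n \mid n \in \omega \cs$ is a base for $\mathcal F$. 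Indeed, given $F \in \mathcal F$ there is some $n$ with $H_n \subseteq^* [F]^{<\omega}$, so $[F_n]^{<\omega} \subseteq^* [F]^{<\omega}$; inspecting the singletons $\os k \cs \subseteq F_n$ then yields $F_n \subseteq^* F$.

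It remains to show that $\mathbb M(\mathcal F)$ with $\mathcal F$ countably generated is forcing equivalent to $\mathbb C$. The subposet
\[D = \os (s, F_n \setminus r) \mid s, r \in \fin,\ n \in \omega \cs\]
is countable and dense in $\mathbb M(\mathcal F)$, because for any $(s, A)$ one can pick $n$ with $F_n \subseteq^* A$ and put $r = F_n \setminus A$. Moreover $\mathbb M(\mathcal F)$ is atomless: for any $(s, A)$ and any pair $k_1 < k_2$ in $A$ above $\max s$, the two extensions $(s \cup \os k_1 \cs, A)$ and $(s \cup \os k_2 \cs, A)$ are incompatible, since their first coordinates admit no common $\sqsubseteq$-extension. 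An atomless poset with a countable dense subset has the Cohen algebra as its Boolean completion, so $\mathbb M(\mathcal F)$ is forcing equivalent to $\mathbb C$.

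The main obstacle lies in the first step, namely verifying that $\mathcal F^{<\omega}$ is genuinely Borel whenever $\mathcal F$ is, so that the preceding Proposition can be invoked; the naive description of $\mathcal F^{<\omega}$ only gives an analytic set in general. One option is to appeal to the analytic version of Laflamme's theorem; another is to show that in the present setting the existential quantifier in the definition of $\mathcal F^{<\omega}$ can be eliminated. Once $\mathcal F$ is known to be countably generated, the final Cohen equivalence is routine.
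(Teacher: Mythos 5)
Your argument is exactly the paper's: Theorem~\ref{thm:ED-Mathias} plus the Laflamme proposition give that $\mathcal F^{<\omega}$, and hence $\mathcal F$, is countably generated, after which $\mathbb M(\mathcal F)$ has a countable atomless dense subset; the details you supply (extracting a base of $\mathcal F$ from one of $\mathcal F^{<\omega}$ by looking at singletons, the dense set of conditions $(s,F_n\setminus r)$, atomlessness) are all correct and are precisely what the paper leaves implicit. The one obstacle you flag---that $\mathcal F^{<\omega}$ is prima facie only analytic---applies verbatim to the paper's own proof, which simply calls $\mathcal F^{<\omega}$ ``Borel'' without further comment, so your write-up is no less complete than the original on this point.
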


\begin{proof}
If $\mathbb M(\mathcal F)$ does not add an
eventually different, then the Borel $\mathcal{F}^{<\omega}$ is
$+$-Ramsey and hence countably generated. Thus $\mathcal F$ is also countably
generated and $\mathbb M(\mathcal F)$ has a countable dense subset.
\end{proof}

It is not hard to see that any forcing of size less than
$\cov(\mathscr{M})  $ can not add an eventually different real, so we have
another proof of the following well known result,

\begin{corollary}
If $\mathcal{I}$ is a Borel ideal which is not countably generated then
$\cov(\mathscr{M}) \leq \cof(\mathcal{I})$.
\end{corollary}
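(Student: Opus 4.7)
The plan is to derive a contradiction from the assumption $\cof(\mathcal{I}) < \cov(\mathscr{M})$, combining the preceding corollary with the quoted fact that any forcing of size less than $\cov(\mathscr{M})$ cannot add an eventually different real.

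First I would observe that a Borel ideal $\mathcal{I}$ is countably generated if and only if its dual filter $\mathcal{F} = \mathcal{I}^*$ is countably generated, since a countable base of one yields complements that form a countable base of the other. Hence, if $\mathcal{I}$ is not countably generated, neither is the Borel filter $\mathcal{F}$. The contrapositive of Corollary~\ref{cor:ED-or-Cohen} applied to $\mathcal{F}$ then says that $\mathbb{M}(\mathcal{F})$ must add an eventually different real.

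Next I would estimate the size of $\mathbb{M}(\mathcal{F})$ up to forcing equivalence. Let $\mathcal{D}\subseteq\mathcal{F}$ be a subfamily cofinal under $\supseteq$, obtained by taking complements of a family cofinal in $\mathcal{I}$ under $\subseteq$ of cardinality $\cof(\mathcal{I})$. Then $\{(s,D)\mid s\in\fin,\; D\in\mathcal{D}\}$ is dense in $\mathbb{M}(\mathcal{F})$: for any condition $(s,A)$ pick $D\in\mathcal{D}$ with $D\subseteq A$, and $(s,D)\leq(s,A)$. This dense subset has cardinality $\aleph_0\cdot\cof(\mathcal{I})=\cof(\mathcal{I})$, so $\mathbb{M}(\mathcal{F})$ is forcing equivalent to a poset of size $\cof(\mathcal{I})$.

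Finally, if we had $\cof(\mathcal{I}) < \cov(\mathscr{M})$, the quoted fact would imply that this poset adds no eventually different real, contradicting the first paragraph. Therefore $\cov(\mathscr{M})\leq\cof(\mathcal{I})$. The only conceptual step is identifying $\cof(\mathcal{I})$ with the density of $\mathbb{M}(\mathcal{F})$; everything else is a direct appeal to the results already on the page.
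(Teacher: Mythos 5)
Your proof is correct and follows essentially the route the paper intends: the contrapositive of Corollary~\ref{cor:ED-or-Cohen} gives that $\mathbb M(\mathcal I^*)$ adds an eventually different real, while a dense subset of size $\cof(\mathcal I)$ together with the quoted fact about forcings of size less than $\cov(\mathscr M)$ forces $\cov(\mathscr M)\leq\cof(\mathcal I)$. The identification of the density of $\mathbb M(\mathcal I^*)$ with $\cof(\mathcal I)$ is exactly the small observation the paper leaves implicit.
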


Corollary~\ref{cor:ED-or-Cohen} can be derived directly from~\cite[Conclusion 9.16]{shelah-nep},
which says that if a Suslin c.c.c.\ forcing adds a non-Cohen real,
then it makes the set of ground model reals meager.
See also~\cite[Corollary 3.5.7]{zapletal-idealized}.

\section{Laver type forcing}\label{sec:Laver}

We will address the question of preserving hitting families with Laver
type forcing. Since every forcing adding a real destroys some maximal almost disjoint family,
it only makes sense to ask for survival of hitting families with some additional properties.
Preservation of $\omega$-hitting and $\omega$-splitting families with Laver forcing $\mathbb L$
was studied in~\cite{dow}.
A characterization of the strong preservation of these properties with forcing $\mathbb L(\mathcal F)$
for a filter $\mathcal F$ was given in~\cite{jorg-michael}.
Preservation of spliting familes with $\mathbb L(\mathcal F)$ was also studied in~\cite{dilip-jorg}.
We utilize methods used in~\cite{dow} to characterize $\omega$-hitting and
$\omega$-splitting families for which the Laver forcing $\mathbb L(\mathcal F^+)$
preserves the $\omega$-hitting and the $\omega$-splitting property.

\begin{definition}
	Let $\mathcal X \subset \pw{\omega}$ be a family of sets
	and let $\mathcal F$ be a filter on $\omega$.
	We say that $\mathcal X$ is \emph{$\mathcal F^+$-$\omega$-hitting}
	if for every countable set of functions
	$\os f_n\colon \omega \to \omega \mid n \in \omega\cs$
	such that $f_n[\omega] \in  \mathcal{F}^+$ for each $n \in \omega$,
	there exists $X \in \mathcal X$ such that $f_n[X] \in \mathcal{F}^+$
	for each $n \in \omega$.
\end{definition}

Obviously, every $\mathcal F^+$-$\omega$-hitting family must be $\omega$-hitting.

\begin{proposition}
	Let $\mathcal F$ be a filter on $\omega$ and let $\mathcal X \subset \pw{\omega}$.
	The following are equivalent;
	\begin{enumerate}
		\item\label{hit-prop:condition} $\mathcal X$ is $\mathcal F^+$-$\omega$-hitting,
	 	\item\label{hit-prop:omega} $\mathbb L(\mathcal F^+)$ preserves ``$\check{\mathcal X}$ is $\omega$-hitting.''
	\end{enumerate}
\end{proposition}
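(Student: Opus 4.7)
\emph{Proof plan.} I will prove the two implications separately; in each case the task is to mediate between the set-theoretic property of $\mathcal X$ and a forcing statement about $\mathbb L(\mathcal F^+)$.

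For $(\ref{hit-prop:omega}) \Rightarrow (\ref{hit-prop:condition})$ I will fix a countable family $\os f_n \mid n \in \omega \cs$ with $f_n[\omega] \in \mathcal F^+$ and a surjection $g\colon\omega\to\omega$ with infinite fibers, and build a Laver-type condition $T_0$ whose branching set at each node $t$ is $f_{g(|t|)}[\omega] \cap (\max t, \infty) \in \mathcal F^+$; the truncation at $\max t$ ensures the generic branch $\dot r$ is strictly increasing and hence injective. I set $\dot B_n := f_n^{-1}\mleft[ \dot r[g^{-1}(n)] \mright]$. For every $k \in g^{-1}(n)$ the value $\dot r(k)$ lies in $f_n[\omega]$ and thus has a nonempty $f_n$-preimage, and injectivity of $\dot r$ makes these preimages pairwise disjoint across $k$; hence $T_0 \Vdash \dot B_n$ is infinite. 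The preservation hypothesis then yields $T \leq T_0$ and $Y \in \mathcal X$ with $T \Vdash Y \cap \dot B_n$ is infinite for every $n$. To conclude $f_n[Y] \in \mathcal F^+$ I argue by contradiction: if $\omega \setminus f_n[Y] \in \mathcal F$, then at every $t$ with $g(|t|)=n$ the intersection of the branching set of $T$ at $t$ with $\omega \setminus f_n[Y]$ is still in $\mathcal F^+$, so $T$ can be shrunk to force $\dot r(k) \notin f_n[Y]$ for every $k \in g^{-1}(n)$, whence $Y \cap \dot B_n = \emptyset$, contradicting the choice of $T$.

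For $(\ref{hit-prop:condition}) \Rightarrow (\ref{hit-prop:omega})$ I start from a condition $T$ and names $\dot B_n$ forced infinite, fix a surjection $\nu\colon\omega\to\omega$ with $\nu^{-1}(n)$ infinite for every $n$, and perform a standard Laver fusion to obtain $T' \leq T$ equipped with a labeling $\phi$ of the nodes of $T'$ above the stem such that $T'[t] \Vdash \phi(t) \in \dot B_{\nu(|t|)}$. Because each relevant $\dot B_{\nu(|t|)}$ is forced infinite, the fusion can additionally be arranged so that $\phi$ is strictly increasing along branches and $i \mapsto \phi(t\conc i)$ is one-to-one on $\os i \mid t\conc i \in T' \cs$ for every $t$; both auxiliary properties are routine.

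The main obstacle is that hypothesis (\ref{hit-prop:condition}) supplies an \emph{image}-type statement $f[X] \in \mathcal F^+$, while building a subtree of $T'$ that forces $X \cap \dot B_n$ infinite requires a \emph{preimage}-type condition, namely $\os i \mid t\conc i \in T',\, \phi(t\conc i) \in X \cs \in \mathcal F^+$ at each relevant $t$. The bridge is an inverse trick: for each $t$ the map $h_t(i) := \phi(t\conc i)$ is one-to-one on $\os i \mid t\conc i \in T' \cs$, so any total extension $f_t \colon \omega \to \omega$ of the partial inverse $h_t^{-1}$ satisfies $f_t[\omega] \supseteq \os i \mid t\conc i \in T' \cs \in \mathcal F^+$. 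Applying (\ref{hit-prop:condition}) to the countable family $\os f_t \mid t \in T',\, t \supseteq \text{stem} \cs$ yields $X \in \mathcal X$ with $f_t[X] \in \mathcal F^+$ for every $t$, and this unwinds exactly to $\os i \mid t\conc i \in T',\, \phi(t\conc i) \in X \cs \in \mathcal F^+$. Taking this latter set to be the branching set of $T^*$ at each $t$ defines a Laver-type condition $T^* \leq T'$; along any generic branch the values $\phi(\dot r \restriction k)$ are distinct and lie in $X \cap \dot B_{\nu(k)}$, and since $\nu^{-1}(n)$ is infinite for every $n$ we conclude $T^* \Vdash X \cap \dot B_n$ is infinite for every $n$, as required.
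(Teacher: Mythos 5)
Your argument for $(\ref{hit-prop:omega}) \Rightarrow (\ref{hit-prop:condition})$ is essentially the paper's construction (tree branching into $f_{g(|t|)}[\omega]$, preimage names), but your names $\dot B_n = f_n^{-1}\left[\dot r[g^{-1}(n)]\right]$ omit a tail parameter, and this leaves a real gap in the final contradiction. The condition $T$ handed to you by the preservation hypothesis may have a stem already containing a value $\dot r(k) = v \in f_n[Y]$ with $k \in g^{-1}(n)$; if the fiber $f_n^{-1}[\{v\}]$ meets $Y$ in an infinite set, then every extension of $T$ forces $Y \cap \dot B_n$ to be infinite, even though $f_n[Y]$ may perfectly well lie in $\mathcal F^*$ (take $f_n$ with one huge fiber). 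Your shrinking only controls $\dot r(k)$ for $k$ beyond the stem, so it yields $Y \cap \dot B_n \subseteq$ (a ground-model union of finitely many fibers), not $Y \cap \dot B_n = \emptyset$. The paper sidesteps this by putting the whole family $\dot A_n^k = f_n^{-1}\left[\dot\ell[b_n \setminus k]\right]$, $k \in \omega$, into the countable witness family, so that the cutoff $k$ can be taken equal to the stem length after the condition is known; your proof is repaired the same way.

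The direction $(\ref{hit-prop:condition}) \Rightarrow (\ref{hit-prop:omega})$ has a more serious problem: the step you call routine --- a fusion producing a labeling $\phi$ of \emph{all} nodes of $T'$ above the stem with $T'[t] \Vdash \phi(t) \in \dot B_{\nu(|t|)}$ --- is not available. Deciding a member of $\dot B$ below a node in general forces you to lengthen the stem past that node. Already for $\mathcal F$ the Fr\'echet filter and the name $\dot B = \{\dot\ell(j) \mid j \geq m\}$, no condition with stem shorter than $m$ decides any element of $\dot B$: given a candidate value $v$, delete $v$ from every branching set at levels $\geq m$ to get an extension forcing $v \notin \dot B$. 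Since the depth required can depend on the node and not merely on its level (replace $m$ by $\dot\ell(0)$), no re-choice of $\nu$ saves the scheme. The correct tool is to decide elements of $\dot B_n$ only on a \emph{front} (maximal antichain) of the condition; this is precisely the content of the paper's Claim~\ref{claim:laver-induct_step}, whose function $f \colon k_n \mapsto i$ (for $t_n \supseteq s\conc i$), with $f(k) = 0$ otherwise, is your ``inverse trick'' implemented at the right granularity, and the remaining bookkeeping is then handled by an elementary submodel and a genuine fusion. Finally, a smaller slip in the same place: the total extension of the partial inverse $h_t^{-1}$ cannot be arbitrary --- it must send everything outside $\operatorname{ran}(h_t)$ to a single value, since otherwise $f_t[X] \in \mathcal F^+$ can hold for reasons unrelated to $\phi$ and does not unwind to $\{\, i \mid \phi(t\conc i) \in X \,\} \in \mathcal F^+$.
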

\begin{proof}
Start with (\ref{hit-prop:condition}) implies (\ref{hit-prop:omega}).
For conditions $S, T \in \mathbb L(\mathcal F^+)$, where the stem of
$T$ is $r \in \omega^k$, we write $S <^n T$ if
$S < T$ and $S\cap \omega^{k+n} = T \cap \omega^{k+n}$.


Let $\theta$ be a large enough cardinal and let $M \prec H_\theta$
be a countable elementary submodel containing $\mathcal F$.
Let $X \in \mathcal X$ be such that $f[X] \in \mathcal F^+$ for each
$f\colon \omega \to \omega$, $f \in M$ such that $f[\omega] \in \mathcal F^+$.

\begin{claim}\label{claim:laver-induct_step}
Let $A \in M$ be an $\mathbb L(\mathcal F^+)$-name,
and $S \in \mathbb L(\mathcal F^+) \cap M$ be a condition
such that $S \Vdash \dot A \in {[\omega]}^\omega$.
There exists $S' <^0 S$ such that for each $T'< S'$ there is $t \in T'$
such that $S'[t] \in M$ and $S'[t] \Vdash \check X \cap \dot{A} \neq \emptyset$.
\end{claim}

Since $S$ is countable and $A$ is a name for an infinite set,
we can inductively build a sequence $\os \la t_n, k_n, R_n \ra \mid n \in \omega \cs \in M$
such that

\begin{itemize}
	\item $t_n \in S$, $k_n \in \omega$, $R_n \in \mathbb L(\mathcal F^+)$,
	\item $R_n <^0 S[t_n]$,
	\item $R_n \Vdash k_n \in \dot A$,
	\item $k_n \neq k_m$ for $n \neq m$,
	\item $I = \{ t_n \mid n \in \omega \}$ is a maximal antichain in $S$.
\end{itemize}
Put $S' = \bigcup \os R_n \mid k_n \in X \cs$.
Let $r$ be the stem of $S$.
We only need to show that for each $s \in S$ such that $r \leq s <t$
for some $t \in I$, the set
$\os i \in \omega \mid s\conc i \in S'\cs$ is in $\mathcal F^+$.
Define a function $f \colon \omega \to \omega$ in $M$
by $f\colon k_n \mapsto i$ if $t_n \geq s\conc i$ for $n \in \omega$,
and $f \colon k \mapsto 0$ otherwise.
Note that  $\os i \in \omega \mid s\conc i \in S \cs \subseteq f[\omega] \in \mathcal F^+$
since $I$ is maximal bellow $s$.
Thus $f[X] = \os i \in \omega \mid s\conc i \in S'  \cs \in \mathcal F^+$.
\claimdone\medskip

Let $T \in \mathbb{L}(\mathcal F^+) \cap M$ be a condition with stem $r$.
Enumerate $\os A_n \mid n \in \omega \cs$ all $\mathbb L(\mathcal F^+)$-names
belonging to $M$
such that $\Vdash \dot A_n \in {[\omega]}^\omega$ for each $n \in \omega$.
We will inductively construct a fusion sequence of conditions
$\os T_n \mid n \in \omega \cs$
starting with $T_0 = T$ such that
\begin{itemize}
	\item $T_{n+1} <^n T_n$ for each $n \in \omega$,
	\item for each $T' < T_n$ there is $t \in T'$ such that
	      $T_n[t] \in M$ and \\
	      $T_n[t] \Vdash \dot{A}_n \cap \check{X} \neq \emptyset$.
\end{itemize}

Suppose that $T_n$ is constructed and use the inductive hypothesis to find
a maximal antichain
$J \subset \os t \in T_n \mid n + \card{r} < \card{t}, T_n[t] \in M\cs$
 in $T_n$.
For each $t \in J$ use Claim~\ref{claim:laver-induct_step} for $S = T_n[t]$
and $A = A_{n+1}$ to get $T'_n[t] <^0 T_n[t]$ as in
the statement of the claim.
Now $T_{n+1} = \bigcup \os T'_n[t] \mid t \in J \cs$ is as required.

Once this sequence is constructed
put $R = \bigcap \os T_ n \mid n \in \omega \cs \in \mathbb L(\mathcal F^+)$.
Now $R \Vdash \dot{A}_n \cap \check{X} \neq \emptyset$ for each $n \in \omega$,
and the implication is proved.
\medskip

For the other direction, assume there are functions
$\os f_n \colon \omega \to \omega \mid n \in n \cs$
such that $f_n[\omega] \in \mathcal F^+$,
and for each $X \in \mathcal X$ there is $n \in \omega$
such that $f_n[X] \in \mathcal F^*$.
Fix $\os b_n \in {[\omega]}^\omega \mid n \in \omega \cs$, a partition of $\omega$ into infinite sets.
Let $\dot{\ell}$ be a name for the  $\mathbb L(\mathcal F^+)$ generic real, and define a name for $\dot{A}_n^k \subset \omega$
by declaring $\dot{A}_n^k = f_n^{-1}\left[\dot\ell[b_n\setminus k]\right]$ for each $k,n \in \omega$.
Inductively define $T \in \mathbb L(\mathcal F^+)$ such that $t \conc i \in T$ iff $i \in f_n[\omega]$
for $t \in T^{[b_n]}$. Notice that $T$ forces that $\dot{A}_n^k$ is infinite for each $k,n \in \omega$.

Take any $X \in \mathcal X$ and let $S < T$ be a condition with stem $r$.
There is $n \in \omega$ such that $f_n[X] \in \mathcal F^*$.
Put
\[S' = S \setminus \os s \in T \mid \exists t \in T^{[b_n]}, r < t : \exists i \in f[X] : t\conc i \subseteq s  \in S\cs.\]
Note that $S' \in \mathbb L(\mathcal F^+)$ since we removed only
$\mathcal F^*$ many immediate successors of each splitting node of $S$.
Also notice that $S' \Vdash X \cap \dot{A}^{\card{r}}_n = \emptyset$.
Thus for each $X \in \mathcal X$ the condition $T$ forces
that $X$ does not have infinite intersection with all sets $\dot{A}_n^k$,
and $\mathcal X$ is not $\omega$-hitting in the extension.
\end{proof}

We can formulate the ``splitting'' version of the previous result. 
A similar result for $\mathbb L(\mathcal F)$, where $\mathcal F$ is a filter, 
is contained in~\cite[Section~6]{dilip-jorg}.

\begin{definition}
	Let $\mathcal X \subset \pw{\omega}$ be a family of sets
	and let $\mathcal F$ be a filter on $\omega$.
	We say that $\mathcal X$ is \emph{$\mathcal F^+$-$\omega$-splitting}
	if for every countable set of functions
	$\os f_n\colon \omega \to \omega \mid n \in \omega\cs$
	such that $f_n[\omega] \in  \mathcal{F}^+$ for each $n \in \omega$,
	there exists $X \in \mathcal X$ such that $f_n[X], f_n[\omega \setminus X] \in \mathcal{F}^+$
	for each $n \in \omega$.
\end{definition}

Again, every $\mathcal F^+$-$\omega$-splitting family is $\omega$-splitting.
The same proof as before with the obvious adjustments gives us the following.

\begin{proposition}
	Let $\mathcal F$ be a filter on $\omega$ and let $\mathcal X \subset \pw{\omega}$.
	The following are equivalent;
	\begin{enumerate}
		\item\label{split-prop:condition} $\mathcal X$ is $\mathcal F^+$-$\omega$-splitting,
	 	\item\label{split-prop:omega} $\mathbb L(\mathcal F^+)$ preserves ``$\check{\mathcal X}$ is $\omega$-splitting,''
	\end{enumerate}
\end{proposition}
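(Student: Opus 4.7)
The plan is to adapt the proof of the preceding proposition verbatim, making one substantive change: where that proof used $\mathcal F^+$-$\omega$-hitting to obtain $f[X] \in \mathcal F^+$ for a single $X \in \mathcal X$, the splitting version must obtain \emph{both} $f[X] \in \mathcal F^+$ and $f[\omega \setminus X] \in \mathcal F^+$ for a single $X$. This is exactly what $\mathcal F^+$-$\omega$-splitting delivers when applied to a suitable countable family of functions, and it is the only place where the hypothesis enters.

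For the implication (\ref{split-prop:condition}) $\Rightarrow$ (\ref{split-prop:omega}), I would fix a countable $M \prec H_\theta$ containing $\mathcal F$ and apply $\mathcal F^+$-$\omega$-splitting to the countable family of all $f \in M$ with $f[\omega] \in \mathcal F^+$, obtaining a single $X \in \mathcal X$ such that both $f[X]$ and $f[\omega \setminus X]$ lie in $\mathcal F^+$ for every such $f$. I would then state Claim~\ref{claim:laver-induct_step} in two variants for a name $\dot A \in M$ forced by $S \in \mathbb L(\mathcal F^+) \cap M$ to be infinite: a ``hit'' variant producing $S'_0 <^0 S$ such that every $T' < S'_0$ has a node $t$ with $S'_0[t] \in M$ and $S'_0[t] \Vdash \check X \cap \dot A \neq \emptyset$, and a symmetric ``miss'' variant producing $S'_1 <^0 S$ with the analogous property for $\dot A \setminus \check X \neq \emptyset$. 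The hit variant is verbatim the original; the miss variant replaces the closing definition by $S'_1 = \bigcup \os R_n \mid k_n \notin X \cs$ and invokes $f[\omega \setminus X] \in \mathcal F^+$ (in place of $f[X] \in \mathcal F^+$) to verify that the set of surviving immediate successors of each relevant node is $\mathcal F^+$-positive. In the fusion I would enumerate pairs $\la \dot B_n, \epsilon_n \ra$ so that every $\la \dot A \setminus k, \epsilon \ra$, with $\dot A \in M$ a name for an infinite subset of $\omega$, $k \in \omega$, and $\epsilon \in \os 0,1 \cs$, appears at some step, and apply the $\epsilon_n$-th variant of the claim at step $n$. The limit $R = \bigcap \os T_n \mid n \in \omega \cs$ then forces $\check X$ to both meet and omit every $\dot A \setminus k$, hence to split every such $\dot A$.

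For the converse, I would reuse the tree $T$ and the names $\dot A^k_n = f_n^{-1}[\dot \ell[b_n \setminus k]]$ from the preceding proof, assuming by thinning that each $f_n$ is injective. Given $X \in \mathcal X$ and $S < T$ with stem $r$, the failure of $\mathcal F^+$-$\omega$-splitting supplies an $n$ with $f_n[X] \in \mathcal F^*$ or $f_n[\omega \setminus X] \in \mathcal F^*$. In the first case the original surgery (removing successors $t \conc i$ with $i \in f_n[X]$ at $\mathcal F^+$-many levels) produces $S' < S$ forcing $\check X \cap \dot A^{\card r}_n = \emptyset$. In the second case the symmetric surgery (removing successors $i \in f_n[\omega] \setminus f_n[X]$) produces $S'' < S$ forcing $\dot A^{\card r}_n \subseteq \check X$; that this surgery yields a valid Laver condition uses $f_n[X] \in \mathcal F^+$, which is automatic from $f_n[\omega] \in \mathcal F^+$ and $f_n[\omega \setminus X] \in \mathcal F^*$ because $\mathcal F^*$ is an ideal. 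Either way $X$ fails to split $\dot A^{\card r}_n$.

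The only real obstacle is bookkeeping: organizing the fusion so that both variants of Claim~\ref{claim:laver-induct_step} are applied infinitely often to every shift $\dot A \setminus k$ of every infinite-set name $\dot A \in M$, while maintaining the fusion constraints $T_{n+1} <^n T_n$. The rest of the argument is a direct translation of the proof of the previous proposition.
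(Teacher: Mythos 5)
Your proposal matches the paper's intent exactly: the paper gives no separate argument for the splitting version, saying only that ``the same proof as before with the obvious adjustments'' works, and the adjustments you describe (a ``hit'' and a ``miss'' variant of Claim~\ref{claim:laver-induct_step}, bookkeeping over pairs $\la \dot A\setminus k,\epsilon\ra$ in the fusion, and a symmetric surgery in the converse) are the right ones. The one step I would not let stand is the reduction ``assuming by thinning that each $f_n$ is injective'': replacing a non-injective $f_n$ by an injective function with the same range does not obviously preserve the witnessing property that for every $X\in\mathcal X$ some $f_n[X]$ or $f_n[\omega\setminus X]$ lies in $\mathcal F^*$, since any such replacement scrambles the preimages, so this is not a legitimate ``without loss of generality.'' Fortunately injectivity is not needed. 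In the second case, instead of removing the successors $i\in f_n[\omega]\setminus f_n[X]$, remove the successors $i\in f_n[\omega\setminus X]$. The surviving successors at each relevant node form the set $f_n[\omega]\setminus f_n[\omega\setminus X]$, which is in $\mathcal F^+$ because $f_n[\omega]\in\mathcal F^+$ and $f_n[\omega\setminus X]\in\mathcal F^*$; and now $\dot\ell[b_n\setminus \card{r}]$ avoids $f_n[\omega\setminus X]$ entirely, so $\dot A^{\card{r}}_n=f_n^{-1}\left[\dot\ell[b_n\setminus \card{r}]\right]$ is disjoint from $\omega\setminus X$, i.e.\ contained in $X$, with no injectivity assumption. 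Everything else in your write-up, including the observation that shifts $\dot A\setminus k$ upgrade ``nonempty intersection/difference'' to ``infinite intersection/difference,'' is sound and is exactly the paper's (unwritten) argument.
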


\bibliography{exploited}

\providecommand{\bysame}{\leavevmode\hbox to3em{\hrulefill}\thinspace}
\providecommand{\MR}{\relax\ifhmode\unskip\space\fi MR }
\providecommand{\MRhref}[2]{%
  \href{http://www.ams.org/mathscinet-getitem?mr=#1}{#2}
}
\providecommand{\href}[2]{#2}
\begin{thebibliography}{GHMC14}

\bibitem[BH09]{jorg-michael}
J{\"o}rg Brendle and Michael Hru{\v{s}}{\'a}k, \emph{Countable {F}r\'echet
  {B}oolean groups: an independence result}, J. Symbolic Logic \textbf{74}
  (2009), no.~3, 1061--1068. \MR{2548480 (2010m:03093)}

\bibitem[BJ95]{judah-bartoszynski}
Tomek Bartoszy{\'n}ski and Haim Judah, \emph{Set theory}, A K Peters, Ltd.,
  Wellesley, MA, 1995, On the structure of the real line. \MR{1350295
  (96k:03002)}

\bibitem[BR14]{dilip-jorg}
J{\"o}rg Brendle and Dilip Raghavan, \emph{Bounding, splitting, and almost
  disjointness}, Ann. Pure Appl. Logic \textbf{165} (2014), no.~2, 631--651.
  \MR{3129732}

\bibitem[Can88]{canjar}
R.~Michael Canjar, \emph{Mathias forcing which does not add dominating reals},
  Proc. Amer. Math. Soc. \textbf{104} (1988), no.~4, 1239--1248. \MR{969054
  (89k:03054)}

\bibitem[{\relax Ch}RZ14]{david-lyubomyr}
David {\relax Ch}odounsk{\'y}, Du{\v{s}}an Repov{\v{s}}, and Lyubomyr Zdomskyy,
  \emph{Mathias forcing and combinatorial covering properties of filters}, to
  appear in J. Symbolic Logic (2014).

\bibitem[Dow90]{dow}
Alan Dow, \emph{Two classes of {F}r\'echet-{U}rysohn spaces}, Proc. Amer. Math.
  Soc. \textbf{108} (1990), no.~1, 241--247. \MR{975638 (90j:54028)}

\bibitem[Far00]{farah-quotients}
Ilijas Farah, \emph{Analytic quotients: theory of liftings for quotients over
  analytic ideals on the integers}, Mem. Amer. Math. Soc. \textbf{148} (2000),
  no.~702, xvi+177. \MR{1711328 (2001c:03076)}

\bibitem[Far02]{farah-question}
\bysame, \emph{How many {B}oolean algebras {${\scr P}({\Bbb N})/\scr I$} are
  there?}, Illinois J. Math. \textbf{46} (2002), no.~4, 999--1033. \MR{1988247
  (2004f:03091)}

\bibitem[FS03]{farah-solecki}
Ilijas Farah and S{\l}awomir Solecki, \emph{Two {$F_{\sigma\delta}$} ideals},
  Proc. Amer. Math. Soc. \textbf{131} (2003), no.~6, 1971--1975 (electronic).
  \MR{1955288 (2004a:54030)}

\bibitem[GHMC14]{gru-hru-mar}
Osvaldo Guzm{\'a}n, Michael Hru{\v{s}}{\'a}k, and Arturo Mart{\'i}nez-Celis,
  \emph{Canjar filters}, Notre Dame J. Form. Log. (2014).

\bibitem[Gro87]{groszek}
Marcia~J. Groszek, \emph{Combinatorics on ideals and forcing with trees}, J.
  Symbolic Logic \textbf{52} (1987), no.~3, 582--593. \MR{902978 (89e:03089)}

\bibitem[HM14]{hrusak-minami}
Michael Hru{\v{s}}{\'a}k and Hiroaki Minami, \emph{Mathias-{P}rikry and
  {L}aver-{P}rikry type forcing}, Ann. Pure Appl. Logic \textbf{165} (2014),
  no.~3, 880--894. \MR{3142391}

\bibitem[HV11]{hrusak-verner}
Michael Hru{\v{s}}{\'a}k and Jonathan~L. Verner, \emph{Adding ultrafilters by
  definable quotients}, Rend. Circ. Mat. Palermo (2) \textbf{60} (2011), no.~3,
  445--454. \MR{2861027}

\bibitem[HZ08]{hrusak-zapletal}
Michael Hru{\v{s}}{\'a}k and Jind{\v{r}}ich Zapletal, \emph{Forcing with
  quotients}, Arch. Math. Logic \textbf{47} (2008), no.~7-8, 719--739.
  \MR{2448955 (2010a:03059)}

\bibitem[JK84]{just-krawczyk}
Winfried Just and Adam Krawczyk, \emph{On certain {B}oolean algebras {${\scr
  P}(\omega )/I$}}, Trans. Amer. Math. Soc. \textbf{285} (1984), no.~1,
  411--429. \MR{748847 (86f:04003)}

\bibitem[Laf96]{laflamme}
Claude Laflamme, \emph{Filter games and combinatorial properties of
  strategies}, Set theory ({B}oise, {ID}, 1992--1994), Contemp. Math., vol.
  192, Amer. Math. Soc., Providence, RI, 1996, pp.~51--67. \MR{1367134
  (97a:03057)}

\bibitem[Mat77]{mathias}
A.~R.~D. Mathias, \emph{Happy families}, Ann. Math. Logic \textbf{12} (1977),
  no.~1, 59--111. \MR{0491197 (58 \#10462)}

\bibitem[Oli04]{oliver}
Michael~Ray Oliver, \emph{Continuum-many {B}oolean algebras of the form {$\scr
  P(\omega)/\scr I$}, {$\scr I$} {B}orel}, J. Symbolic Logic \textbf{69}
  (2004), no.~3, 799--816. \MR{2078923 (2005e:03143)}

\bibitem[Paw94]{pawlikowski}
Janusz Pawlikowski, \emph{Undetermined sets of point-open games}, Fund. Math.
  \textbf{144} (1994), no.~3, 279--285. \MR{1279482 (95i:54043)}

\bibitem[She04]{shelah-nep}
S.~Shelah, \emph{Properness without elementaricity}, J. Appl. Anal. \textbf{10}
  (2004), no.~2, 169--289. \MR{2115943 (2005m:03097)}

\bibitem[Ste05]{steprans}
Juris Stepr{\=a}ns, \emph{Many quotient algebras of the integers modulo
  co-analytic ideals}, Logic and its applications, Contemp. Math., vol. 380,
  Amer. Math. Soc., Providence, RI, 2005, pp.~271--281. \MR{2167583
  (2006f:03079)}

\bibitem[VH72]{vopenka}
Petr Vop{\v{e}}nka and Petr H{\'a}jek, \emph{The theory of semisets}, Academia
  (Publishing House of the Czechoslovak Academy of Sciences), Prague, 1972.
  \MR{0444473 (56 \#2824)}

\bibitem[Zap08]{zapletal-idealized}
Jind{\v{r}}ich Zapletal, \emph{Forcing idealized}, Cambridge Tracts in
  Mathematics, vol. 174, Cambridge University Press, Cambridge, 2008.
  \MR{2391923 (2009b:03002)}

\end{thebibliography}
\bibliographystyle{amsalpha}

\end{document}